\DeclareMathOperator*{\argmin}{arg\,min}
\DeclareMathOperator*{\argmax}{arg\,max}
\DeclareMathOperator{\tr}{tr}
\DeclareMathOperator{\cSpan}{Span}
\DeclareMathOperator{\eig}{eig}
\DeclareMathOperator{\dist}{dist}
\def\vx{\boldsymbol{x}}
\def\vy{\boldsymbol{y}}
\def\mA{\boldsymbol{A}}
\def\mC{\boldsymbol{C}}
\def\mH{\boldsymbol{H}}
\def\mI{\boldsymbol{I}}
\def\mL{\boldsymbol{L}}
\def\mP{\boldsymbol{P}}
\def\mQ{\boldsymbol{Q}}
\def\mR{\boldsymbol{R}}
\def\mU{\boldsymbol{U}}
\def\mX{\boldsymbol{X}}
\def\mZ{\boldsymbol{Z}}
\def\v0{\boldsymbol{0}}
\def\blamb{\boldsymbol{\Lambda}}
\def\blam{\boldsymbol{\lambda}}
\def\bbH{\mathbb{H}}
\def\bbN{\mathbb{N}}
\def\bbR{\mathbb{R}}
\def\cS{\mathcal{S}^D}
\def\cSp{\mathcal{S}_{+}^D}
\def\cSpp{\mathcal{S}_{++}^D}
\def\cE{\mathcal{E}}
\def\cN{\mathcal{N}}
\def\cX{\mathcal{X}}
\def\cY{\mathcal{Y}}
\newcommand{\TheTitle}{Distributed Robust Subspace Recovery}
\newcommand{\TheAuthors}{Vahan Huroyan and Gilad Lerman}
\newcommand*{\QED}{\hfill\ensuremath{\square}}%
\title{{\TheTitle}}
\author{
  Vahan Huroyan\thanks{School of Mathematics, University of Minnesota, Twin Cities $\{$\email{huroy002, lerman}$\}$\email{@umn.edu}}
  \and
  Gilad Lerman\footnotemark[1]
}
\begin{document}

\newtheorem{mydef}{Definition}
\newtheorem{Theorem}{Theorem}

\maketitle

\begin{abstract}

We propose distributed solutions to the problem of Robust Subspace Recovery (RSR). Our setting assumes a huge dataset in an ad hoc network without a central processor, where each node has access only to one chunk of the dataset. Furthermore, part of the whole dataset lies around a low-dimensional subspace and the other part is composed of outliers that lie away from that subspace. The goal is to recover the underlying subspace for the whole dataset, without transferring the data itself between the nodes. We first apply the Consensus Based Gradient method to the Geometric Median Subspace algorithm for RSR. For this purpose, we propose an iterative solution for the local dual minimization
problem and establish its r-linear convergence.
We then explain how to distributedly implement the Reaper and Fast Median Subspace algorithms for RSR.
The proposed algorithms display competitive performance on both synthetic and real data.
\end{abstract}

\begin{keywords}
	Distributed Algorithms, Consensus-Based Algorithms, Principal Component Analysis (PCA), Robust Subspace Recovery (RSR), Geometric Median
\end{keywords}

\begin{AMS}
68W15, 65K05, 62H25, 90C06
\end{AMS}

\section{Introduction}
Distributed computing is a central theme in modern computation. Its setting includes a system with multiple components, which communicate and coordinate in order to achieve their common computational goal. A special distributed setting assumes a central processor, which is connected to all other processors. This processor contains no data, but has enough memory to handle some computations, such as averaging communicated estimates. A more general distributed setting assumes an arbitrarily connected network of processors, among which the data is partitioned. Each processor computes a local estimate of the desired output based on its local data and on estimates passed by its neighbors. Then, it communicates its estimate to its neighbors. This procedure iterates until convergence.

Some common approaches for solving distributed computing problems are the diffusion method \cite{chen2012diffusion}, the Consensus-Based Gradient Ascent (CBGA) \cite{bertrand2011consensus,johansson2009on,boyd2011distributed, rabbat2005generalized}, the distributed subgradient method \cite{Nedic2009distributed, nedic2010distributed} and the Consensus Alternating Direction Method of Multipliers (CADMM) \cite{boyd2011distributed,nedic2010distributed,shi2014on, chang2015multi, mateos2010distributed}. Some of these algorithms have been successfully adapted to important applied problems of signal processing and wireless communications \cite{schizas2008consensus,mateos2010distributed,zhu2010distributed,forero2010consensus}. Various distributed algorithms have been proposed for the important problem of Principal Component Analysis (PCA) and related problems, such as the total least squares. Most of them are for centrally-processed networks \cite{qu2002principal,qi2004global,bai2005principal,liang2014improved,meng2012distributed,valcarcel2010consensus}, but some of them are for arbitrarily connected networks \cite{aduroja2013distributed,bertrand2011consensus}.  To the best of our knowledge there are no distributed algorithms for robust versions of PCA.

This work discusses distributed algorithms for Robust Subspace Recovery (RSR) with arbitrarily connected networks. RSR is an alternative paradigm for PCA that is more robust to outliers.
The underlying problem of RSR assumes data points, composed of inliers and outliers, where the inliers are well-explained by an affine low-dimensional subspace and the outliers come from a different model. The goal is to recover the underlying subspace in the presence of outliers. A careful review of the problem and its solutions appears in \cite{RSR_review_LM18}.

We first suggest a distributed implementation for the Geometric Median Subspace (GMS) \cite{gms} algorithm for RSR, which applies to arbitrarily connected networks. We propose an iterative algorithm for the local dual problem and establish its $r$-linear convergence (defined later in \Cref{def:rlinear}).
We also propose distributed implementations for two other RSR algorithms: Reaper \cite{reaper} and FMS \cite{maunu2014fast}. This is done by iterative application of distributed PCA. On the other hand, the GMS implementation does not iterate the distributed scheme and is thus more efficient 
in terms of the communication cost. We remark that the theorems for robustness of GMS, Reaper and FMS carry over to our distributed setting.

The paper is organized as follows: \S\ref{sec:cbga} contains a short introduction to CBGA and its convergence analysis; \S\ref{sec:distgms} proposes the distributed CBGA algorithm for GMS and discusses its various properties; \S\ref{sec:distreaperfms} proposes immediate distributed implementations for the Reaper and FMS algorithms; and \S\ref{sec:numerical} concludes with numerical experiments that test the proposed algorithms for distributed RSR. Appendices~\ref{sec:distpca} and~\ref{sec:geommedian} use ideas of \S\ref{sec:cbga} to solve the problems of distributed PCA and distributed geometric median.
\Cref{sec:cadmm} explains how to apply CADMM instead of CBGA for a distributed version of GMS. \Cref{append:proofs} provides details of proofs of all theoretical statements.


\section{Review of Consensus-Based Gradient Ascent (CBGA)}
\label{sec:cbga}
The setting of CBGA \cite{rabbat2005generalized} assumes a connected network, with $K$ nodes and $M$ edges. It also assumes a convex set of matrices $S \subseteq \bbR^{D \times D}$  and convex functions $F_1, \dots, F_K$ on $S,$ associated with the $K$ nodes.  The goal is to minimize $\sum_{k = 1}^K {F_k}$ over $S$, where each node $k$ has only access to $F_k$ and may communicate to its neighbors. The consensus-based formulation of this problem uses local neighborhoods as follows. For  $1 \le k \le K,$ let $\cN_k$ denote the set of all nodes connected (by an edge) to the node $k.$ The desired problem, $\min_{\mQ \in S} \sum_{k = 1}^K F_k(\mQ),$ can be computed locally as follows:
\begin{equation}
\min \limits_{\mQ_1, \dots \mQ_K \in S} \sum \limits_{k = 1}^K F_k(\mQ_k), \text{where } \mQ_k = \mQ_q, \forall 1 \le k \le K,  q \in \cN_k, q < k.
\label{eq:cbsaeq}
\end{equation}
The constraints in the right side of \eqref{eq:cbsaeq} are called {\em consensus constraints}. The consensus constraints have the following formulation by a matrix equation.
For $1 \le m \le M$, let $e_m$ denote the edge indexed by $m$. We write $e_m = \{k, q\}$ whenever $e_m$ connects the nodes indexed by $k$ and  $q$.
For $1 \le k \le K$ and $1 \le m \le M$,  $\mC_{mk}$ is the following $D \times D$ matrix
\begin{equation}
\mC_{mk} = c_{mk} \mI, \text{ where } \ c_{mk} =
\begin{cases}
1, & \text{if } e_m = \{k, q\} \text{ and } k < q;\\
-1, & \text{if } e_m = \{k, q\}  \text{ and } q < k;\\
0, & \text{otherwise}.
\end{cases}
\label{eq:matconsconst}
\end{equation}
Let $\mC$ denote the $DM \times DK$ block matrix with blocks $\{\mC_{mk}\}_{m = 1, k = 1}^{M, K}$ and let $
\bar{\mQ} = [\mQ_1^T, \dots, \mQ_K^T ]^T,$ then the consensus constraints can be formulated as $\boldsymbol{C} \bar{\mQ} = \boldsymbol{0}$.

The minimization problem of \eqref{eq:cbsaeq} is inseparable and thus hard to compute in a distributed setting. That is, one cannot find the exact solution by just computing and adding results from each node. Instead, one needs to invoke the dual problem, which we describe next. The Lagrangian for problem \eqref{eq:cbsaeq} is
\begin{equation*}
L(\blamb, \boldsymbol{\bar{Q}}) = \sum \limits_{k = 1}^K F_k(\mQ_k) + \tr(\blamb^T \mC \bar{\mQ}),
\end{equation*}
where $\blamb = [\blamb_1^T, \dots, \blamb_M^T ]^T \in \bbR^{MD \times D},$ and the dual function is
\begin{equation}
d(\blamb) = \min \limits_{\bar{\mQ} \in S^K} L(\blamb, \bar{\mQ}).
\label{eq:consslave}
\end{equation}
Finally, the dual problem of \eqref{eq:cbsaeq} is
\begin{equation}
\hat{\blamb} =  \argmax \limits_{\blamb \in \bbR^{M D \times D} } d(\blamb).
\label{eq:maxprob}
\end{equation}

Recall that strong duality means that the minimizer of \eqref{eq:consslave} with $\hat{\blamb}$ found by the dual problem \eqref{eq:maxprob} coincides with the minimizer of \eqref{eq:cbsaeq}. In order to solve \eqref{eq:consslave}, the CBGA procedure uses the following separability of the dual function: $d(\blamb) = \sum_{k = 1}^K d_k(\blamb),$ where
\begin{equation}
\label{eq:cbga_local_main}
d_k(\blamb) = \min_{\mQ_k \in S} (F_k(\mQ_k) +  \tr(\blamb_m^T \mA_k)),
\end{equation}
\begin{equation}
\label{eq:def_Ak}
\mA_k = \sum_{m \in \cE_k} c_{mk} \blamb_m^T,
\end{equation}
$\{c_{mk} \} _{m=1, k=1} ^ {M ~~~ K}$ are defined in \eqref{eq:matconsconst}
and $\cE_k$ denotes the set of all edges that contain the node $k.$ Such separation gives rise to a distributed solution of \eqref{eq:consslave}. In order to solve \eqref{eq:maxprob}, the CBGA procedure applies subgradient descent over $\blamb$. According to \cite{dbsa}, one possible subgradient is $\mC \boldsymbol{\bar{Q} (\blamb)},$ where $\boldsymbol{\bar{Q}}(\blamb)$ is the solution of \eqref{eq:consslave} for the given $\blamb.$ Moreover, if $d(\blamb)$ is differentiable, then $\mC \boldsymbol{\bar{Q} (\blamb)}$ is the gradient. Therefore, the CBGA algorithm simultaneously solves problems \eqref{eq:consslave} and \eqref{eq:maxprob}. It starts with an initial guess of $\blamb$, then solves the separable problem of \eqref{eq:consslave}, next uses it for subgradient descent update of \eqref{eq:maxprob}, which results in a new value of $\blamb$, and iterates the two main steps until convergence.
The CBGA procedure converges if the following conditions are satisfied (see \cite{dbsa}): 1. the set $H$ is convex and the functions $F_k$ are convex; 2. strong duality holds for \eqref{eq:cbsaeq}; 3. the subgradients of $d(\blamb)$ are uniformly bounded for all values of $\blamb$.
We emphasize that this procedure assumes a solution of the separable problem in \eqref{eq:consslave} and without such a solution it is inapplicable.


\section{Distributed GMS}
\label{sec:distgms}

We review the GMS problem in \S\ref{sec:gms_review}, propose a distributed solution in \S\ref{sec:cbgagms}, establish convergence guarantees in \S\ref{sec:cbga_gms_conv} and discuss the time complexity and possible reduction of the communication cost in \S\ref{sec:timecomplex}.

\subsection{Review of GMS}
\label{sec:gms_review}

In order to motivate the GMS algorithm for RSR, we first review the following convex formulation of PCA for full-rank data due to~\cite{gms}.
Assume that $\cX = \{ \vx_i\}_{i = 1}^N$ is a dataset of $N$ points in $\bbR^D,$ centered at $\v0$ and recall that the PCA $d$-subspace is the $d$-dimensional linear subspace minimizing the sum of squared residuals. If the dataset $\cX$ is full rank, then according to Theorem 10 of \cite{gms} the PCA $d$-subspace is spanned by the bottom $d$ eigenvectors of the following matrix $\hat{\mQ}$ (or equivalently, the top $d$ eigenvectors of $-\hat{\mQ}$):
\begin{equation}
\label{eq:pcapropmin}
\hat{\mQ} = \argmin \limits_{\mQ \in \bbH} \sum \limits_{\vx \in \cX} \Vert\mQ \vx\Vert^2, \text{ where } \bbH = \{\mQ \in \cS, \tr(\mQ) = 1 \}.
\end{equation}
Here and throughout the paper $\cS$ denotes the set of $D$-dimensional symmetric matrices, $\cSp$ denotes the set of $D$-dimensional positive semi-definite matrices and $\cSpp$ denotes the set of $D$-dimensional positive definite matrices.

The GMS procedure modifies~\eqref{eq:pcapropmin} by replacing the squared deviations $\Vert\mQ \vx\Vert^2$ in \eqref{eq:pcapropmin} with the more robust unsquared deviations $\Vert\mQ \vx\Vert$, while smoothing the resulted objective function around $\v0$ with a parameter $\delta > 0$. The convex minimization problem of GMS~\cite{gms} for the dataset
$\cX = \{\vx_i\}_{i=1}^N \subset \mathbb{R}^D$ and the regularization parameter $\delta$ is
\begin{equation}
\tilde{\mQ} = \argmin \limits_{\mQ \in \bbH} F^{\delta}(\mQ),
\label{eq:gmsstat}
\end{equation}
where $\bbH$ is defined in \eqref{eq:pcapropmin} and
\begin{equation}
F^{\delta}(\mQ) = \sum \limits_{\vx \in \cX, \Vert\mQ \vx \Vert \ge \delta} \Vert\mQ \vx\Vert + \sum \limits_{\vx \in \cX, \Vert\mQ \vx\Vert < \delta} \left( \frac{\Vert\mQ \vx \Vert^2}{2 \delta} + \frac{\delta}{2} \right).
\label{eq:dgmslocopt}
\end{equation}
Given a target dimension $1 \leq d \leq D-1$, the output of GMS is a $d$-dimensional subspace spanned by the bottom $d$ eigenvectors of $\tilde{\mQ}$ (or the top ones of $-\tilde{\mQ}$).

Clearly, the objective function in~\eqref{eq:pcapropmin} is strictly convex for full-rank data. The objective function in~\eqref{eq:dgmslocopt} is strictly convex under the following stronger condition, which is referred to as the two-subspaces criterion~\cite{gms}:
\begin{mydef}
\label{def:twosubdef}
A dataset $\cY$ satisfies the two-subspaces criterion if
\begin{equation}
(\cY \cap \mL_1) \cup (\cY \cap \mL_2) \ne \cY \text{ for all } D-1 \text{ dimensional subspaces } \mL_1, \mL_2 \in \bbR^D.
\label{eq:twosubeq}
\end{equation}
\end{mydef}
When this criterion is satisfied, the unique minimizer of~\eqref{eq:gmsstat} can be computed by a very simple IRLS procedure (see Algorithm 2 in \cite{gms}).
If the dataset is not centered, one may appropriately center it at each iteration of the IRLS procedure. Alternatively and more commonly, one may initially center the original data by the geometric median.

Zhang and Lerman~\cite{gms} discuss the conditions under which GMS recovers the underlying subspace and show that they hold with high probability under a certain probabilistic model describing inliers and outliers (see \S 1.3 and \S 2 of \cite{gms}).
These conditions can be non-technically described as follows. First, the inliers need to spread throughout the whole underlying subspace, that is, they cannot concentrate on a lower dimensional subspace of the underlying subspace. Second, the outliers need to spread throughout the complement of the underlying subspace within the ambient space. Third, the magnitude of outliers needs to be restricted and they may not concentrate around lines. Zhang and Lerman~\cite{gms} propose some ways of preprocessing the data to avoid some restrictions imposed by these conditions (see \S5.2 of \cite{gms}).

The GMS solution to \eqref{eq:dgmslocopt} can be interpreted as a robust inverse covariance estimator. Indeed, the solution
to the least-squares problem~\eqref{eq:pcapropmin} is a scaled version of the inverse sample covariance (see Theorem~10 of \cite{gms}).
The IRLS procedure, which aims to solve  \eqref{eq:dgmslocopt}, scales the cross products of the sample covariance at each iteration in a way which may avoid the effect of outliers, and then inverts the resulting matrix or a regularized version of it.



\subsection{Consensus-Based Subgradient Algorithm for Distributed GMS}
\label{sec:cbgagms}
We assume a dataset $\cX$ with $\{\cX_k \}_{k=1}^K$ distributed at $K$ nodes. We further assume that for $1 \le k \le K,$ $\cX_k$ satisfies the two-subspaces criterion (see Definition~\eqref{def:twosubdef}), so they are full rank.
For general $\cX_1, \dots, \cX_K$ which may not satisfy this criterion, we suggest reducing their dimensions (see e.g., the discussion in \S\ref{sec:distpca}) so that they are full-rank.
In typical cases of noisy inliers concentrated around a subspace, the preprocessed $\cX_1, \dots, \cX_K$ with full rank will also satisfy the two-subspaces criterion.

We follow \S\ref{sec:cbga} and solve the minimization problem for the dual function of GMS in each node, while communicating these solutions via CBGA. Following \eqref{eq:cbga_local_main}, \eqref{eq:gmsstat} and \eqref{eq:dgmslocopt}, we need to solve at each node the following optimization problem:
\begin{equation}
d_k(\blamb) = \min \limits _{\mQ \in \bbH} G_k^{\delta} (\mQ) \text{ for } G_k^{\delta} (\mQ) = F_k^{\delta} (\mQ) + \tr(\mQ \mA_k),
\label{eq:gmsslavegen}
\end{equation}
where
\begin{equation*}
F_{k}^{\delta}(\mQ) = \sum \limits_{\vx \in \cX_k, \Vert\mQ \vx\Vert \ge \delta} \Vert\mQ \vx\Vert + \sum \limits_{\vx \in \cX_k, \Vert\mQ \vx\Vert < \delta} \left( \frac{\Vert\mQ \vx\Vert^2}{2 \delta} + \frac{\delta}{2} \right).
\end{equation*}
To find the minimizer of \eqref{eq:gmsslavegen} sufficiently fast, we introduce an iterative algorithm similar to Algorithm 2 of \cite{gms} and guarantee its $r$-linear convergence. Let  $\mQ_k^0 = \mI/D$ (or arbitrarily fix $\mQ_k^0 \in \cSpp \cap \bbH$) and for iteration $1 \le t \le T,$ let $\mQ_k^t$ be the solution of the following Lyapunov equation in $\mQ$,
where $c_k \in \bbR$ is chosen such that $\tr(\mQ_k^t) = 1$:
\begin{equation}
\mQ  \left( \sum \limits_{\vx \in \cX_k} \frac{\vx \vx^T}{2 \max(\Vert\mQ_k^{t-1} \vx\Vert, \delta)} \right) + \left( \sum \limits_{\vx \in \cX_k} \frac{\vx \vx^T}{2 \max (\Vert\mQ_k^{t-1} \vx\Vert, \delta)} \right) \mQ = c_k \mI - \mA_k.
\label{eq:itlyapsolreg}
\end{equation}

The following lemma establishes the existence and uniqueness of $c_k \in \bbR$ and $\mQ_k^t \in \cSpp \cap \bbH$, which satisfy \eqref{eq:itlyapsolreg}. It is proved in \S\ref{append:lem2}.
\begin{lemma}
Let $\cX = \{\vx_i\}_{i=1}^N$ be a full rank dataset in $\bbR^{D},$ $\mQ \in \cSpp \cap \bbH$ and $\mA \in \cS$ with $\tr(\mA) = 0$ and
\begin{equation}
\Vert \mA \Vert_2 \le 1 \biggr/ \tr \left( \left(\sum \limits _{\vx \in \cX} \frac{\vx \vx^T}{2\max(\Vert\vx\Vert, \delta)} \right)^{-1}\right).
\label{eq:gmsthcond}
\end{equation}
There exists a unique $c' \in \bbR$ such that the following equation with $c=c'$
\begin{equation}
\mP \left( \sum \limits_{\vx \in \cX} \frac{\vx \vx^T}{2 \max(\Vert\mQ \vx\Vert, \delta)} \right) + \left( \sum \limits_{\vx \in \cX} \frac{\vx \vx^T}{2 \max(\Vert\mQ \vx\Vert, \delta)} \right) \mP + \mA = c \mI
\label{eq:lyappsd}
\end{equation}
has a unique solution $\mP \in \cSpp \cap \bbH$.

If $\mQ_*$ is the solution of \eqref{eq:lyappsd} with $c=0$ and $\mA = \mA_k$, then
\begin{equation}
\label{eq:findingc}
c' =  - {2(\tr(\mQ_*) - 1)}\biggr/{\tr \left(\left( \sum \limits _{\vx \in \cX} \frac{\vx \vx^T}{2 \max(\Vert\mQ_* \vx\Vert, \delta)} \right)^{-1} \right)}.
\end{equation}
\label{lem:lemmapsd}
\end{lemma}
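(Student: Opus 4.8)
The plan is to treat \eqref{eq:lyappsd} as a parametrized Lyapunov equation. Fix $\mB := \sum_{\vx \in \cX} \vx\vx^T/(2\max(\Vert\mQ\vx\Vert,\delta))$, which lies in $\cSpp$ because $\cX$ is full rank and $\mQ$ is positive definite, and rewrite \eqref{eq:lyappsd} as $\mP\mB + \mB\mP = c\mI - \mA$ with $c$ a scalar. Since $\mB \succ 0$, the Lyapunov operator $\mP \mapsto \mP\mB + \mB\mP$ is invertible on $D\times D$ matrices — its eigenmatrices $\vv_i\vv_j^T$, for an eigenbasis $\vv_i$ of $\mB$, have eigenvalues $\lambda_i(\mB) + \lambda_j(\mB) > 0$ — so for every $c$ there is a unique solution, and transposing the equation shows it is symmetric. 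As $\tfrac{c}{2}\mB^{-1}$ solves the equation with right-hand side $c\mI$, that solution is $\mP(c) = \tfrac{c}{2}\mB^{-1} + \mQ_*$, where $\mQ_*$ is the solution at $c = 0$. Consequently $\tr(\mP(c)) = \tfrac{c}{2}\tr(\mB^{-1}) + \tr(\mQ_*)$ is strictly increasing in $c$ (because $\tr(\mB^{-1}) > 0$), so exactly one value $c'$ gives $\tr(\mP(c')) = 1$; this pins down $c'$ uniquely, and for that $c'$ the Lyapunov solution $\mP(c')$ is automatically the unique one. Multiplying the $c = 0$ equation $\mQ_*\mB + \mB\mQ_* + \mA = \boldsymbol{0}$ by $\mB^{-1}$ and taking traces gives $2\tr(\mQ_*) = -\tr(\mB^{-1}\mA)$; solving $\tr(\mP(c')) = 1$ for $c'$ then yields $c' = -2(\tr(\mQ_*)-1)/\tr(\mB^{-1}) = (2 + \tr(\mB^{-1}\mA))/\tr(\mB^{-1})$, which is \eqref{eq:findingc} in the case $\mA = \mA_k$.

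The one nontrivial point, where the smallness hypothesis \eqref{eq:gmsthcond} enters, is that $\mP(c') \in \cSpp$. The strategy is to show $c'\mI - \mA \succ 0$, since then $\mP(c') \succ 0$ by an elementary argument: if $\vv$ were an eigenvector of $\mP(c')$ with eigenvalue $\lambda_{\min}(\mP(c')) \le 0$, then $\vv^T(\mP(c')\mB + \mB\mP(c'))\vv = 2\lambda_{\min}(\mP(c'))\,\vv^T\mB\vv \le 0$, contradicting $\vv^T(c'\mI - \mA)\vv > 0$. To get $c' > \lambda_{\max}(\mA)$ I would combine three estimates. First, from $c' = (2 + \tr(\mB^{-1}\mA))/\tr(\mB^{-1})$ and $|\tr(\mB^{-1}\mA)| \le \Vert\mA\Vert_2\,\tr(\mB^{-1})$ (which follows by conjugating $\lambda_{\min}(\mA)\mI \preceq \mA \preceq \lambda_{\max}(\mA)\mI$ by $\mB^{-1/2}$ and taking traces), we get $c' \ge 2/\tr(\mB^{-1}) - \Vert\mA\Vert_2$. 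Second, since $\mQ \in \cSpp \cap \bbH$ satisfies $\Vert\mQ\Vert_2 \le \tr(\mQ) = 1$, we have $\Vert\mQ\vx\Vert \le \Vert\vx\Vert$ for every $\vx$, hence $\mB \succeq \mB' := \sum_{\vx \in \cX}\vx\vx^T/(2\max(\Vert\vx\Vert,\delta))$ and therefore $\tr(\mB^{-1}) \le \tr((\mB')^{-1})$. Third, \eqref{eq:gmsthcond} says $\Vert\mA\Vert_2 \le 1/\tr((\mB')^{-1})$, so $\Vert\mA\Vert_2 \le 1/\tr(\mB^{-1})$. Chaining these, $c' \ge 2/\tr(\mB^{-1}) - 1/\tr(\mB^{-1}) = 1/\tr(\mB^{-1}) \ge \Vert\mA\Vert_2 \ge \lambda_{\max}(\mA)$, and a brief case distinction — $\mA = \boldsymbol{0}$ trivially, and for $\mA \ne \boldsymbol{0}$ the first inequality is strict because $\tr(\mA) = 0$ forbids $\mA$ from being a nonzero multiple of $\mI$ — upgrades this to $c' > \lambda_{\max}(\mA)$, i.e. $c'\mI - \mA \succ 0$.

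Putting these together, $\mP(c')$ is symmetric, positive definite and of unit trace, hence the unique element of $\cSpp \cap \bbH$ solving \eqref{eq:lyappsd} at $c = c'$, while no other value of $c$ works; and $c'$ is given by \eqref{eq:findingc}. I expect the trace-and-comparison bookkeeping of the second paragraph to be the only delicate part — in particular recognizing that $\Vert\mQ\Vert_2 \le 1$ is exactly what lets the bound \eqref{eq:gmsthcond}, which is phrased in terms of $\Vert\vx\Vert$, control the $\mQ$-dependent matrix $\mB$; the rest is routine Lyapunov-equation manipulation.
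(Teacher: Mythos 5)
Your proof is correct and follows essentially the same route as the paper's: the paper's preliminary Lemma on the Lyapunov equation supplies the affine, strictly increasing dependence of the trace on $c$ (your first paragraph), and the paper likewise bounds $c'$ from below by taking traces after conjugating the equation with the inverse of the coefficient matrix, comparing $\tr(\mX^{-1})$ with $\tr(\mX_{*}^{-1})$ via $\Vert\mQ\vx\Vert\le\Vert\vx\Vert$, and invoking \eqref{eq:gmsthcond} to conclude $c'\ge\lambda_1(\mA)$ and hence positive definiteness (there by citing a reference rather than your eigenvector argument, and using $\lambda_D(\mA)$ in place of your slightly weaker $-\Vert\mA\Vert_2$). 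Two minor remarks: your handling of strictness ($c'>\lambda_1(\mA)$ via $\tr(\mA)=0$) is more careful than the paper's, and your derivation shows that the weight in the denominator of \eqref{eq:findingc} should read $\max(\Vert\mQ\vx\Vert,\delta)$ rather than $\max(\Vert\mQ_{*}\vx\Vert,\delta)$, which appears to be a typo in the displayed formula.
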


\Cref{algo:solveequation1} summarizes the above procedure of solving \eqref{eq:gmsslavegen}.
In \S\ref{sec:cbga_gms_conv} we establish the $r$-linear convergence of $\{\mQ_k^t\}_{t \in \bbN}$ to the minimizer of \eqref{eq:gmsslavegen}.

Given this solution of the local problem, the iterative CBGA algorithm for GMS is straightforward. As explained in \S\ref{sec:cbga}, at each iteration $s \ge 1$ and edge $e_m = \{k, q\}$, indexed by $1 \leq m \leq M$, the CBGA algorithm needs to update the corresponding $\blamb_m^s$ by the following gradient descent procedure
\begin{equation}
	\blamb_m^{s} = \blamb_m^{s-1} + \mu \cdot (c_{mk} \mQ_k^{s-1} - c_{mk} \mQ_q^{s-1}).		
\label{eq:lambdupdate}
\end{equation}					
Note that the update of $\blamb_m^{s}$ in \eqref{eq:lambdupdate} uses $\blamb_m^{s-1}$ and the local solutions   $\{\mQ_k^{s-1}\}_{k=1}^K$ of the previous iteration $s-1$.
The idea is to use $\blamb_m^s$ in solving the local problems. However, these problems only require the matrices $\mA^{s}_k =  \sum \limits_{m \in \cE_k} c_{mk} (\blamb_m^{s})^T$ for $k=1,\ldots, K$.
The combination of \eqref{eq:lambdupdate}, the latter expression for $\mA^{s}_k$ (see also \eqref{eq:def_Ak}), the fact that $c_{mk}^2=1$ whenever the $m$th edge is incident to the $k$th vertex and appropriate replacement of the set of edges $\cE_k$ with the set of vertices $\cN_k$ results in the following update formula		
\begin{equation}
\label{eq:analog_cbga}
\mA_k^s = \mA_k^{s-1} + \rho \sum\limits_{q  \in \cN_k} \left(\mQ_k^{s-1} - \mQ_q^{s-1} \right).
\end{equation}
The CBGA procedure for GMS thus iteratively updates the matrices $\{\mA_k^s\}_{k=1}^K$, by using the solutions of the local problems according to \eqref{eq:analog_cbga}, and solves the local problems by using the matrices $\{\mA_k^s\}_{k=1}^K$.
This simple procedure, which we refer to as CBGA-GMS is summarized in \Cref{algo:consgms}. In \S\ref{append:lemscalecond} we discuss how a sufficiently small step-size in \Cref{algo:consgms} ensures that the above condition \eqref{eq:gmsthcond}, which is necessary for solving the local problems, is satisfied at each node for all iterations of \Cref{algo:solveequation1}. We also explain in \S\ref{append:lemscalecond} why the required upper bound in \eqref{eq:gmsthcond} can be relaxed in practice and based on this observation we suggest a practical choice for the step-size in \eqref{eq:step_size_practice}.


\begin{algorithm}[htbp]
\caption{Algorithm for computing the minimizer of \eqref{eq:gmsslavegen}}
\label{algo:solveequation1}

\begin{algorithmic}
 \STATE \textbf{Input:} $\cX = \{ \vx_1, \dots, \vx_N\} \subseteq \mathbb{R}^D$: data, $\mA_k \in \cS$ with $\tr(\mA_k) = 0,$ $T_{GMS}$: stopping iteration number, $\delta:$ regularization parameter (default: $10^{-10}$)
\STATE \textbf{Set: } $\mQ_k^0 = \mI / D$ and $t = 0$

\WHILE {$t \le T_{GMS}$ or $G_k^{\delta}(\mQ_k^{t+1}) > G_k^{\delta}(\mQ_k^{t})$}
	\STATE
	\begin{itemize}
		\item Let $\mQ_*$ be the solution of \eqref{eq:lyappsd} with $\mQ = \mQ_k^t,$ $c = 0$ and $\mA = \mA_k$
		\item Compute $c'$ according to \eqref{eq:findingc}
		\item Let $\hat{\mQ}_k^{t+1}$ be the solution of \eqref{eq:lyappsd} with $\mQ = \mQ_k^t,$ $c=c'$ and $\mA = \mA_k$
		\item $t := t+1$
	\end{itemize}
\ENDWHILE

\RETURN $\hat{\mQ}_k := \mQ_k^t$

\end{algorithmic}
\end{algorithm}

\begin{algorithm}[htbp]
\caption{Consensus-Based Subgradient Algorithm for GMS (CBGA-GMS)}
\label{algo:consgms}

\begin{algorithmic}
 \STATE \textbf{Input:} Network with $K$ nodes and $M$ edges, $\cX_1, \dots, \cX_K:$ datasets in the $K$ nodes, $T_{CBGA}, T_{GMS}$: stopping iteration numbers, $\delta$: regularization parameter (default: $10^{-10}$) and $\mu$: sufficiently small constant step-size 


\STATE \textbf{Set:} For all $1 \leq m \leq M$, $\blamb_m^0 = \v0$  and for all $1 \leq k \leq K$, $\mA_k^0 = \v0$  and $\mQ_k^0$ is the solution of \Cref{algo:solveequation1} with input $\cX_k$, $\mA_k^0$, $T_{GMS}$ and $\delta$
\FOR{$s = 1:T_{CBGA}$}
	\FOR{$k = 1:K$}
	\STATE
	\begin{itemize}
		\item Transmit $\mQ_k^{s-1}$ to $\cN_k$
		\item Compute $\mA_k^s$ according to \eqref{eq:analog_cbga}	
		\item  $\mQ_k^{s}$ is the output of \Cref{algo:solveequation1} with input $\cX_k, \mA_k^{s}, T_{GMS}$ and $\delta$
	\end{itemize}
	\ENDFOR
\ENDFOR

\RETURN $L_k := $ the span of the bottom $d$ eigenvectors of $\mQ_k^{T_{CBGA}}, 1 \le k \le K$
\end{algorithmic}
\end{algorithm}


\subsection{Properties of CBGA-GMS}
\label{sec:cbga_gms_conv}

We establish $r$-linear convergence of \Cref{algo:solveequation1} and briefly discuss the mere convergence of \Cref{algo:consgms} and its recovery guarantees.
For completeness, we include the definition of $r$-linear convergence.
\begin{definition}
\label{def:rlinear}
A sequence $\{x_k\}_{k=1}^{\infty} \subset \bbR$ $r$-linearly converges to $x$ if there exists a sequence $\{v_k\}_{k=1}^{\infty} \subset \bbR,$ such that $|x_k - x| < v_k$ for all $k$ and there exists $q \in (0, 1)$ such that $v_{k+1} \le q v_k$ for all $k$ sufficiently large.
\end{definition}

The following theorem guarantees that $\{\mQ_k^t\}_{t \in \bbN}$ of \Cref{algo:solveequation1} $r$-linearly converges to the unique minimizer of \eqref{eq:gmsslavegen}. This theorem is later proved in \S\ref{sec:thproof}.
\begin{Theorem}
Assume $\cX_k = \{\vx_i\}_{i=1}^{N_k} \subset \mathbb{R}^D$ satisfies the two-subspaces criterion, $\mA_k \in \cS$ satisfies \eqref{eq:gmsthcond} and $\tr(\mA_k) = 0$.
If $\{\mQ_k^t\}_{t \in \bbN}$ is obtained by \Cref{algo:solveequation1} at node $k$ with $T_{GMS} = \infty$, then it 
$r$-linearly converges to the unique minimizer of \eqref{eq:gmsslavegen}.
\label{th:itgmsthreg}
\end{Theorem}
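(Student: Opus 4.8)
The plan is to recognize \Cref{algo:solveequation1} as a majorization--minimization (MM) scheme for $G_k^\delta$ on the affine set $\bbH$, deduce monotone descent and boundedness of the iterates, and then upgrade descent to a \emph{geometric} decay of the objective gap, which via a quadratic growth inequality yields $r$-linear convergence of the matrices. Write $\phi(s)=s$ for $s\ge\delta$ and $\phi(s)=s^2/(2\delta)+\delta/2$ for $0\le s<\delta$, so that $F_k^\delta(\mQ)=\sum_{\vx\in\cX_k}\phi(\|\mQ\vx\|)$, and set $\mR(\mQ)=\sum_{\vx\in\cX_k}\vx\vx^T/\bigl(2\max(\|\mQ\vx\|,\delta)\bigr)$. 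The elementary bound $\phi(s)\le s^2/(2a)+a/2$, valid for all $s\ge0$ and $a\ge\delta$ (with equality when $a=\max(s,\delta)$), applied with $a=\max(\|\mQ'\vx\|,\delta)$, shows that
\begin{equation*}
\widetilde G_k(\mQ;\mQ')\ :=\ \tr\bigl(\mQ\,\mR(\mQ')\,\mQ\bigr)+\tr(\mQ\mA_k)+\tfrac12\sum_{\vx\in\cX_k}\max(\|\mQ'\vx\|,\delta)
\end{equation*}
majorizes $G_k^\delta$ on $\bbH$ and agrees with it at $\mQ=\mQ'$; moreover, the identity $\phi'(s)/s=1/\max(s,\delta)$ gives the tangency $\nabla_\mQ\widetilde G_k(\mQ';\mQ')=\nabla G_k^\delta(\mQ')$. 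The first-order condition for minimizing the strictly convex quadratic $\widetilde G_k(\cdot;\mQ_k^t)$ over $\bbH$ is precisely the Lyapunov equation \eqref{eq:lyappsd} (with its multiplier fixed by $\tr\mQ=1$), so by \Cref{lem:lemmapsd} the next iterate $\mQ_k^{t+1}$ of \Cref{algo:solveequation1} is well-defined, equals that minimizer, and lies in $\cSpp\cap\bbH$.

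From the MM sandwich $G_k^\delta(\mQ_k^{t+1})\le\widetilde G_k(\mQ_k^{t+1};\mQ_k^t)\le\widetilde G_k(\mQ_k^t;\mQ_k^t)=G_k^\delta(\mQ_k^t)$ we get monotone descent, so all iterates remain in the sublevel set $\mathcal C=\{\mQ\in\bbH:G_k^\delta(\mQ)\le G_k^\delta(\mI/D)\}$. Because $\cX_k$ is full rank, $F_k^\delta$ grows at least linearly in $\|\mQ\|_F$; under \eqref{eq:gmsthcond} this dominates the linear term $\tr(\mQ\mA_k)$, so $G_k^\delta$ is coercive and $\mathcal C$ is compact, and combined with strict convexity of $G_k^\delta$ under the two-subspaces criterion (cf.\ the remarks around \Cref{def:twosubdef} and \cite{gms}) this gives the unique minimizer $\widetilde{\mQ}_k\in\bbH$ of \eqref{eq:gmsslavegen}, with value $G^\ast$. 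Crucially, since $\delta>0$, on $\mathcal C$ there are constants $0<\beta\le B$, $R$, $L_{\mR}$ with $\beta\mI\preceq\mR(\mQ)\preceq B\mI$, $\|\mQ\|_F\le R$, and $\|\mR(\mQ)-\mR(\mQ')\|_F\le L_{\mR}\|\mQ-\mQ'\|_F$ for all $\mQ,\mQ'\in\mathcal C$; all of these are used below.

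For the rate I would assemble three estimates. (i) Sufficient decrease: $2\beta$-strong convexity of $\widetilde G_k(\cdot;\mQ_k^t)$ together with the MM sandwich gives $G_k^\delta(\mQ_k^t)-G_k^\delta(\mQ_k^{t+1})\ge\beta\|\mQ_k^{t+1}-\mQ_k^t\|_F^2$. (ii) Gap bound: convexity of $G_k^\delta$, the fact that $\nabla_\mQ\widetilde G_k(\mQ_k^{t+1};\mQ_k^t)$ is a multiple of $\mI$ (optimality of $\mQ_k^{t+1}$ on $\bbH$) and hence orthogonal to $\widetilde{\mQ}_k-\mQ_k^{t+1}$, and the identity $\nabla G_k^\delta(\mQ_k^{t+1})-\nabla_\mQ\widetilde G_k(\mQ_k^{t+1};\mQ_k^t)=\mQ_k^{t+1}\bigl(\mR(\mQ_k^{t+1})-\mR(\mQ_k^t)\bigr)+\bigl(\mR(\mQ_k^{t+1})-\mR(\mQ_k^t)\bigr)\mQ_k^{t+1}$, together yield $G_k^\delta(\mQ_k^{t+1})-G^\ast\le 2RL_{\mR}\,\|\mQ_k^{t+1}-\mQ_k^t\|_F\,\|\mQ_k^{t+1}-\widetilde{\mQ}_k\|_F$. (iii) Quadratic growth: $G_k^\delta(\mQ)-G^\ast\ge\mu\|\mQ-\widetilde{\mQ}_k\|_F^2$ on $\mathcal C$, which removes $\|\mQ_k^{t+1}-\widetilde{\mQ}_k\|_F$ from (ii), leaving $G_k^\delta(\mQ_k^{t+1})-G^\ast\le(4R^2L_{\mR}^2/\mu)\,\|\mQ_k^{t+1}-\mQ_k^t\|_F^2$. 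Combining (i) and (iii) gives, with $g_t:=G_k^\delta(\mQ_k^t)-G^\ast$ and $\kappa:=4R^2L_{\mR}^2/(\mu\beta)$, the recursion $g_{t+1}\le\kappa(g_t-g_{t+1})$, i.e.\ $g_t\le q^t g_0$ with $q=\kappa/(1+\kappa)\in(0,1)$; one more use of quadratic growth yields $\|\mQ_k^t-\widetilde{\mQ}_k\|_F\le\sqrt{g_0/\mu}\,(\sqrt q)^{\,t}=:v_t$ with $v_{t+1}=\sqrt q\,v_t$, so each entry of $\mQ_k^t$, hence $\mQ_k^t$ itself, $r$-linearly converges to $\widetilde{\mQ}_k$ per \Cref{def:rlinear}. (This argument also re-derives plain convergence to the minimizer.)

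I expect the main obstacle to be the quadratic growth inequality (iii): strict convexity from the two-subspaces criterion is not by itself enough, and the work is in showing the criterion forces the (one-sided) Hessian of $G_k^\delta$ at $\widetilde{\mQ}_k$ to be uniformly positive definite along trace-free directions --- equivalently, that the only trace-free symmetric $\mH$ with $\mH\vx=\v0$ for every $\vx\in\cX_k$ satisfying $\|\widetilde{\mQ}_k\vx\|<\delta$ and $\mH\vx\parallel\widetilde{\mQ}_k\vx$ for every other $\vx\in\cX_k$ is $\mH=\v0$, with extra care at the kink set $\{\vx\in\cX_k:\|\widetilde{\mQ}_k\vx\|=\delta\}$ where $G_k^\delta$ is only $C^1$. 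I would deduce this either from the convexity analysis in \cite{gms} or by a compactness/contradiction argument reducing exactly to \eqref{eq:twosubeq}, and then pass to a uniform $\mu$ on $\mathcal C$ by one further compactness step. The smoothing $\delta>0$ is used throughout: it keeps $\mR(\cdot)$ bounded below and Lipschitz and $\nabla G_k^\delta$ Lipschitz on $\mathcal C$, and it supplies genuine curvature through the points with $\|\widetilde{\mQ}_k\vx\|<\delta$.
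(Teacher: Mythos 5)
Your Steps (recognizing \Cref{algo:solveequation1} as majorization--minimization with the surrogate $\widetilde G_k(\cdot;\mQ')$, identifying the Lyapunov equation as its first-order condition on $\bbH$, and deducing monotone descent and compactness of the iterates) coincide with Steps 1--3 of the paper's proof, which uses the same majorant $H_k^\delta(\mQ,\mQ^*)$ and derives the sufficient-decrease inequality $G_k^\delta(\mQ_k^t)-G_k^\delta(\mQ_k^{t+1})\ge\frac12\sum_{\vx}\Vert(\mQ_k^t-\mQ_k^{t+1})\vx\Vert^2/\max(\Vert\mQ_k^t\vx\Vert,\delta)$, which is exactly your (i). Where you genuinely diverge is in the endgame: the paper first proves plain convergence of $\{\mQ_k^t\}$ by extracting a subsequential limit, using continuity of the iteration map $T_{\mA}$ and Ostrowski's theorem, and then obtains the $r$-linear rate by verifying Hypotheses 4.1 and 4.2 of Chan--Mulet and invoking their Theorem 6.1 as a black box. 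You instead try to build a self-contained geometric decay $g_{t+1}\le\kappa(g_t-g_{t+1})$ from three ingredients, which, if completed, would be more transparent and would subsume the plain-convergence step.

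The genuine gap is your ingredient (iii), the quadratic growth inequality $G_k^\delta(\mQ)-G^\ast\ge\mu\Vert\mQ-\widetilde{\mQ}_k\Vert_F^2$, which you correctly flag as the main obstacle but do not prove. Strict convexity of $G_k^\delta$ (Theorem 2 of \cite{gms}) does not imply it --- a strictly convex function can be flat to second order at its minimizer --- and without (iii) your recursion does not close, so as written the proposal does not establish the rate. The reduction you sketch is the right one: degeneracy of the (one-sided) Hessian at $\widetilde{\mQ}_k$ in a trace-free direction $\mH$ forces $\mH\vx=\v0$ for $\vx$ with $\Vert\widetilde{\mQ}_k\vx\Vert<\delta$ and $\mH\vx\parallel\widetilde{\mQ}_k\vx$ otherwise, i.e.\ every $\vx\in\cX_k$ lies in a generalized eigenspace of the pencil $(\mH,\widetilde{\mQ}_k)$; if $\mH\ne\v0$ there are at least two distinct generalized eigenvalues (a single one would give $\mH=\alpha\widetilde{\mQ}_k$ and then $\tr\mH=0$ forces $\alpha=0$), and any union of the resulting proper subspaces can be covered by two hyperplanes (group all but one eigenspace into a single proper subspace), contradicting \eqref{eq:twosubeq}. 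Two further points still need care and are not addressed: at the kink set $\{\vx:\Vert\widetilde{\mQ}_k\vx\Vert=\delta\}$ the two one-sided second derivatives differ and you must work with the smaller (projection-type) one; and a Hessian bound at $\widetilde{\mQ}_k$ only gives growth locally, so passing to a uniform $\mu$ on the whole sublevel set $\mathcal C$ requires the compactness argument you allude to (or, alternatively, you must first prove $\mQ_k^t\to\widetilde{\mQ}_k$ by the paper's subsequential-limit route so that a local $\mu$ suffices). Note also that the paper's own appeal to Theorem 6.1 of \cite{chanmul} quietly absorbs an analogous nondegeneracy requirement through Hypothesis 4.1, so your route makes explicit a point the paper handles by citation.
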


Note that CBGA-GMS is a gradient descent method. Indeed, Theorem 2 of \cite{gms} implies the strict convexity of $F^{\delta}$.
This and Theorems 26.1 and 26.3 of \cite{Rockafellar70convexanalysis} imply the differentiability of its dual function $d(\blamb) = \sum_{k = 1}^K d_k(\blamb)$, where $d_k(\blamb)$ is defined in \eqref{eq:gmsslavegen}.

The conditions for convergence of CBGA discussed in \S\ref{sec:cbga} are satisfied for CBGA-GMS. Indeed, the first condition is straightforward, since $G_k^{\delta}$ and $\bbH$ are convex. The strong duality of the problem is shown by easily verifying Slater's condition (see \S 5.2.3 of \cite{boydslater}). Finally,
the gradient of $d(\blamb)$ is $C  \bar{\mQ}$ and its norm is bounded by $K ||C||.$ Indeed, for each $1 \le k \le K,$ the $k$th block of $\bar{\mQ}$, $\mQ_k$, is in $\cSpp$ with $\tr (\mQ_k) = 1$ and thus $||C  \bar{\mQ}|| \le K ||C||.$

Since the convex optimization problem for the total data of CBGA-GMS is the same as the convex optimization problem for regular GMS \cite{gms}, the exact and near recovery theory of CBGA-GMS follow from \cite{gms}.


\subsection{Time Complexity}
\label{sec:timecomplex}

 \Cref{algo:solveequation1} solves  \eqref{eq:itlyapsolreg} twice.
The computation of the coefficient of  \eqref{eq:itlyapsolreg}, $\sum_{i = 1}^{N_k} \vx_i \vx_i^T / (2 \max(\Vert\hat{\mQ}_k^{s-1} \vx_i\Vert, \delta))$, requires  $O \left( N_k \times D^2 \right)$ operations. Solving   \eqref{eq:itlyapsolreg} requires $O \left( D^3 \right)$
operations (see~\cite{bartels1972solution}). Since $N_k \geq  D,$ the total complexity for each iteration of \cref{algo:solveequation1} at node $k$ is $O \left( N_k \times D^2 \right).$ Denoting $N_{\max} = \max_{1 \le k \le K} {N_k},$ we conclude that the  complexities of Algorithms \ref{algo:solveequation1} and  \ref{algo:consgms} are $O \left( T_{GMS} \times N_{\max} \times D^2 \right)$ and $O \left( T_{CBGA} \times T_{GMS} \times N_{\max} \times D^2 \right)$ respectively.

\Cref{algo:consgms} transfers $D \times D$ matrices between nodes in each iteration, which might not be cost efficient. In order to reduce the communication cost we suggest transferring only the top $d$ eigenvectors of those matrices. Once a node receives the top $d$ eigenvectors, it reconstructs the $D \times D$ matrix $\mU^T \mU / \tr(\mU^T \mU),$  where $\mU \in \bbR^{d \times D}$ contains the orthogonal top $d$ eigenvectors as rows. We cannot guarantee the convergence of this modified procedure, but it seems to work well in practice.




\section{Distributed Reaper and Distributed FMS}
\label{sec:distreaperfms}

We present distributed versions of two other RSR algorithms: Reaper \cite{reaper} and FMS \cite{maunu2014fast}. These algorithms are reviewed in \S\ref{sec:review_repaer_fms} and their straightforward distributed implementations are explained in \S\ref{sec:implement_dist_reaper_fms}.


\subsection{Review of the Reaper and FMS Algorithms}
\label{sec:review_repaer_fms}
Assume a dataset $\cX \subset \mathbb{R}^D$, a target dimension $d \in \{1, 2, \dots D-1\}$ and a regularization parameter $\delta > 0$.

The Reaper algorithm \cite{reaper} solves the following convex optimization problem\footnote{The formulation in \cite{reaper} adds the additional optimization constraint $\mI - \mP \in \cSp$, but as is obvious from the proof of Lemma 14 in \cite{gms}, it is not needed and thus omitted from \eqref{eq:reapermin}}:
\begin{equation}
\min \limits_{{\mP \in \cSp, \ \tr(\mP) = d}} \sum_{\substack{\vx \in \cX \\ \Vert \vx - \mP \vx \Vert \ge \delta}} \Vert \vx - \mP \vx\Vert + \sum \limits_{\substack{\vx \in \cX \\ \Vert\vx - \mP\vx\Vert < \delta}} \left( \frac{\Vert\vx - \mP \vx\Vert^2}{2 \delta} + \frac{\delta}{2} \right).
\label{eq:reapermin}
\end{equation}
It uses an IRLS framework for  minimizing \eqref{eq:reapermin}. 
The robust $d$-subspace is spanned by the top $d$ eigenvectors of this solution.
A generic condition for subspace recovery by Reaper with an error bound is established in \cite{reaper}.\footnote{For simplicity, the analysis in \cite{reaper} is restricted to the case where $\delta = 0$.}  It requires similar restrictions as those described in the first and third non-technical conditions for GMS in \S\ref{sec:gms_review}.

Note that plugging $\mQ = \mI - \mP$ into \eqref{eq:dgmslocopt} results in an objective function similar to \eqref{eq:reapermin}. The main difference is that \eqref{eq:reapermin} further assumes that $\mP \in \cSp$. 

The FMS algorithm \cite{maunu2014fast} tries to directly solve a regularized least unsquared deviations variant of PCA. Recall that the PCA subspace minimizes the least-squares function
$\sum_{\vx \in \cX} \dist^2(\vx, L),$ where $\dist(\vx, L) = \min_{y \in L} \Vert \vx - \vy \Vert_2,$ over the Grassmannian $G(D,d),$ which is the set of $d$-dimensional linear subspaces in $\bbR^D.$ The least unsquared deviations cost function is $\sum_{\vx \in \cX} \dist(x, L),$ where $L \in G(D,d).$ FMS aims to minimize the following smooth version of this function with the regularization parameter $\delta > 0$:
\begin{equation}
\label{eq:fmsobjective}
\min \limits_{L \in G(d, D)} \sum \limits_{\substack{\vx \in \cX, \dist(\vx, L) \ge \delta}} \dist(\vx, L) + \sum \limits_{\substack{\vx \in \cX, \dist(\vx, L) < \delta}} \left( \frac{\dist^{2}(\vx, L)} {2 \delta} + \frac{\delta}{2}\right).
\end{equation}
This minimization is hard to solve in general (it was proved to be NP hard when $\delta=0$~\cite{clarkson2015input}). FMS is a straightforward IRLS heuristic for solving \eqref{eq:fmsobjective}. At each iteration it scales the original data points by the square root of their distance to the subspace of the previous iteration and then computes the current subspace by applying PCA to the scaled data.
Recovery and $r$-linear convergence of FMS were established only for data generated from very particular probabilistic models~\cite{maunu2014fast} .
However, in practice FMS seems to obtain competitive accuracy and speed for many datasets.  

We note that the target function in \eqref{eq:fmsobjective} is similar to that in \eqref{eq:dgmslocopt}, where $\dist(x,L)$ replaces $\Vert \mQ \vx \Vert$.
In fact, both GMS and Reaper are convex relaxations of the minimization in \eqref{eq:fmsobjective}, where Reaper is the tightest possible one~\cite{reaper}.

\subsection{Distributed Implementations for Reaper and FMS}
\label{sec:implement_dist_reaper_fms}

We assume a dataset $\cX$ with $\{\cX_k \}_{k=1}^K$ distributed at $K$ nodes so that $\cX_k$ has full rank for $1 \le k \le K$. If the data is not full rank, it is preprocessed according to the discussion in \S\ref{sec:distpca}.

Distributed Reaper requires distributedly solving \eqref{eq:reapermin}. This can be done by applying distributed full PCA at each IRLS iteration of Algorithm 4.1 of \cite{reaper}. More precisely, this procedure first initializes the IRLS weights by $\beta_{\vx}^0 = 1$ for all data points $\vx \in \cX$. 
Then, at each iteration $s \geq 1$ it applies distributed full PCA of the weighted dataset $\{\sqrt{\beta_{\vx}^{s-1}} \vx \}_{\vx \in \cX}$ to obtain $\mP_k^s$ at each processor with index $k.$ Then, it updates the weights by $\beta_{\vx} \leftarrow 1/\max(\delta, \Vert\vx - \mP_k^s \vx \Vert),$ for all $\vx \in \cX.$ This procedure is iterated until convergence and the local subspace is obtained by the top $d$ eigenvectors of $\mP_k^{s'}$, where $s'$ corresponds to the final iteration.

The distributed FMS is obtained by distributed PCA at each iteration of FMS. Note that FMS uses randomized SVD to find only the top $d$ principal components. For central processing and $D \gg d$, we recommend applying a distributed randomized SVD algorithm~\cite{halko2011finding}. For an ad hoc network, we are not aware of effective implementation of a distributed algorithm that can find only the top $d$ principal components.


\section{Numerical Experiments}
\label{sec:numerical}

This section tests the distributed algorithms proposed in this paper using both synthetic and real data. It is organized as follows: \S\ref{sec:rsrmodel} describes the synthetic data model, \S\ref{sec:synthdata} contains experiments on data generated from this model and \S\ref{sec:realdata} contains experiments on real datasets.

Throughout this section, \Cref{algo:solveequation1} uses $T_{GMS} = 30$ and \Cref{algo:consgms} uses $T_{CBGA} = 250$ and $\mu$ as in \eqref{eq:step_size_practice} or in a specified range of values. In all RSR algorithms the regularization parameter is $\delta = 10^{-10}$. CBGA-PCA of \S\ref{sec:distpca} is used as ``distributed PCA'' and is also implemented in the iterative schemes of distributed Reaper and FMS. All codes necessary to duplicate these results are available in \url{https://github.com/vahanhuroyan/Distributed-RSR}.

\subsection{Synthetic Data Model for Distributed RSR}
\label{sec:rsrmodel}

In \S\ref{sec:synthdata} we use the following synthetic model to generate distributed RSR data. It depends on the following parameters: $K, N^0, N^1, D, d$ and $\sigma.$ We create a connected graph with $K$ nodes as explained below, and we randomly fix $L \in G(D, d).$ For each node we sample $N^1 / K$ inliers from the $d$-dimensional Multivariate Normal distribution $N(\v0, \mP_L),$ where $\mP_L$ denotes the orthoprojector onto $L,$ with additive Gaussian noise $N(\v0, \sigma^2\mI)$, where $0 \le \sigma < 1.$ Furthermore, for each node we sample $N^0 / K$ outliers from the uniform distribution on $[0, 1]^D.$ Note that the outliers are asymmetric. Unless otherwise specified (see \S\ref{sec:topconv}), the graph is obtained by arbitrarily generating a spanning tree with $K$ nodes and then randomly and independently connecting $2$ nodes with probability $1/2$. It is demonstrated for $K=8$ in Fig.~\ref{fig:netgr4}.


\subsection{Demonstration on Synthetic Data}
\label{sec:synthdata}

We study the effect of the network topology and the step-size on the convergence rate of CBGA-GMS  in \S\ref{sec:topconv} and \S\ref{sec:convssize} respectively. In \S\ref{sec:cadmmvscbga} we compare the accuracy of a CADMM version of GMS with CBGA-GMS. In \S\ref{sec:compallmethods} we compare our proposed distributed RSR algorithms. In each experiment $50$ random samples are generated according to the model of \S\ref{sec:rsrmodel}. {The recovery error of the tested algorithm is averaged over the random $50$ samples. For Figs.~\ref{fig:topvsiter1}-\ref{fig:admmvsgr} we further average the recovery error over the $K$ processors to demonstrate the average rate of convergence. 
We remark that in all experiments, the data is full rank at each processor, so there was no need to initially apply dimension reduction.

\subsubsection{The Influence of the Network Topology on Convergence}
\label{sec:topconv}
\begin{figure}[ht]
  \begin{subfigure}[b]{0.32\linewidth}
    \centering
    \begin{tikzpicture}[->,>=stealth',shorten >=1pt,auto,node distance=1.3cm, thick,main node/.style={circle,fill=white!20,draw,font=\sffamily\Large\bfseries}]

 	\node[main node] (1) {1};
 	\node[main node] (2) [right of=1] {2};
 	\node[main node] (3) [below right of=2] {3};
 	\node[main node] (4) [below of=3] {4};
 	\node[main node] (5) [below left of=4] {5};
 	\node[main node] (6) [left of=5] {6};
 	\node[main node] (7) [above left of=6] {7};
 	\node[main node] (8) [above of=7] {8};

  	\path[every node/.style={font=\sffamily\small}]
    	(1) edge [right] node[right] {} (2)
    		edge [right] node[right] {} (8)
    	(2) edge node [left] {} (1)
        	edge [right] node[right] {} (3)
    	(3) edge node [left] {} (2)
        	edge [right] node[right] {} (4)
    	(4) edge node [left] {} (3)
        	edge [right] node[right] {} (5)
    	(5) edge node [left] {} (4)
    	(6) edge [right] node {} (7)
   		(7) edge node [left] {} (6)
        	edge [right] node {} (8)
   		(8) edge node [left] {} (7);
   			edge node [left] {} (1)
	\end{tikzpicture}
	\caption{\text{ }}
  	\label{fig:netgr2}
  \end{subfigure}
  \begin{subfigure}[b]{0.32\linewidth}
    \centering
  	\begin{tikzpicture}[->,>=stealth',shorten >=1pt,auto,node distance=1.3cm, thick,main node/.style={circle,fill=white!20,draw,font=\sffamily\Large\bfseries}]

 	\node[main node] (1) {1};
 	\node[main node] (2) [right of=1] {2};
 	\node[main node] (3) [below right of=2] {3};
 	\node[main node] (4) [below of=3] {4};
 	\node[main node] (5) [below left of=4] {5};
 	\node[main node] (6) [left of=5] {6};
 	\node[main node] (7) [above left of=6] {7};
 	\node[main node] (8) [above of=7] {8};

  	\path[every node/.style={font=\sffamily\small}]
    	(1) edge [right] node[right] {} (2)
			edge [bend right] node[right] {} (3)
			edge [right] node[right] {} (4)
			edge [right] node[right] {} (5)
			edge [right] node[right] {} (6)
			edge [right] node[right] {} (7)
			edge [right] node[right] {} (8)
    	(2) edge node [left] {} (1)
			edge [right] node[right] {} (4)
			edge [right] node[right] {} (5)
			edge [right] node[right] {} (6)
			edge [right] node[right] {} (7)
			edge [right] node[right] {} (8)
    		edge [right] node[right] {} (3)
    	(3) edge node [left] {} (2)
			edge [bend left] node[right] {} (1)
			edge [right] node[right] {} (5)
			edge [right] node[right] {} (6)
			edge [right] node[right] {} (7)
			edge [right] node[right] {} (8)
    		edge [right] node[right] {} (4)
    	(4) edge node [left] {} (3)
			edge [right] node[right] {} (1)
			edge [right] node[right] {} (2)
			edge [right] node[right] {} (7)
			edge [right] node[right] {} (8)
			edge [bend right] node[right] {} (6)
    		edge [right] node[right] {} (5)
    	(5) edge node [left] {} (4)
			edge [right] node[right] {} (1)
			edge [right] node[right] {} (2)
			edge [right] node[right] {} (3)
			edge [right] node[right] {} (7)
			edge [right] node[right] {} (8)
    		edge [right] node {} (6)
    	(6) edge node [right] {} (5)
			edge [right] node[right] {} (1)
			edge [right] node[right] {} (2)
			edge [right] node[right] {} (3)
			edge [bend left] node[right] {} (4)
			edge [bend right] node[right] {} (8)
   			edge [right] node {} (7)
   		(7) edge node [left] {} (6)
			edge [right] node[right] {} (1)
			edge [right] node[right] {} (2)
			edge [right] node[right] {} (3)
			edge [right] node[right] {} (4)
			edge [right] node[right] {} (5)
    		edge [right] node {} (8)
   		(8) edge node [left] {} (7)
			edge [right] node[right] {} (1)
			edge [right] node[right] {} (2)
			edge [right] node[right] {} (3)
			edge [right] node[right] {} (4)
			edge [right] node[right] {} (5)
		edge [bend left] node[right] {} (6);
	\end{tikzpicture}
	\caption{\text{ }}
	\label{fig:netgr3}
	\end{subfigure}
  	\begin{subfigure}[b]{0.32\linewidth}
  	\centering
  	\begin{tikzpicture}[->,>=stealth',shorten >=1pt,auto,node distance=1.3cm,thick,main node/.style={circle,fill=white!20,draw,font=\sffamily\Large\bfseries}]

 	\node[main node] (1) {1};
 	\node[main node] (2) [right of=1] {2};
 	\node[main node] (3) [below right of=2] {3};
 	\node[main node] (4) [below of=3] {4};
 	\node[main node] (5) [below left of=4] {5};
 	\node[main node] (6) [left of=5] {6};
 	\node[main node] (7) [above left of=6] {7};
 	\node[main node] (8) [above of=7] {8};

  	\path[every node/.style={font=\sffamily\small}]
    	(1) edge [right] node[right] {} (2)
			edge [bend right] node[right] {} (3)
			edge [right] node[right] {} (7)
    	(2) edge node [left] {} (1)
			edge [right] node[right] {} (6)
			edge [right] node[right] {} (8)
    		edge [right] node[right] {} (3)
    	(3) edge node [left] {} (2)
			edge [bend left] node[right] {} (1)
    		edge [right] node[right] {} (4)
    	(4) edge node [left] {} (3)
			edge [bend right] node[right] {} (6)
			edge [right] node[right] {} (8)
   			edge [right] node[right] {} (5)
    	(5) edge node [left] {} (4)
        	edge [right] node {} (6)
    	(6) edge node [right] {} (5)
			edge [bend left] node[right] {} (4)
			edge [right] node[right] {} (2)
        	edge [right] node {} (7)
   		(7) edge node [left] {} (6)
			edge [right] node[right] {} (1)
    		edge [right] node {} (8)
   		(8) edge node [left] {} (7)
			edge [right] node[right] {} (2)
			edge [right] node[right] {} (4);
	\end{tikzpicture}
  	\caption{\text{ }}
  	\label{fig:netgr4}
	\end{subfigure}
	\caption{Three types of connected networks with 8 nodes. Fig. \ref{fig:netgr2}: sparsely connected network; Fig. \ref{fig:netgr3}: fully connected network; and Fig. \ref{fig:netgr4}: randomly connected network.}
\label{fig:netgraphs}
\end{figure}

To check the effect of the network topology on the convergence rate we use three different networks, whose graphs are shown in Fig.~\ref{fig:netgraphs}. The graph in Fig.~\ref{fig:netgr2} is sparse, the graph in Fig.~\ref{fig:netgr3} is fully connected and the graph in Fig.~\ref{fig:netgr4} is generated according to the recipe described in \S\ref{sec:rsrmodel}.
We generate data according to the model of \S\ref{sec:rsrmodel}, where $K = 10$, $N^1 = 200$, $N^0 = \num{2000}$, $D = 50$, $d = 3$, $\sigma = 0.1$ and $\mu = 100.$ The average recovery error as a function of the number of iterations for the $3$ different networks is shown in Fig.~\ref{fig:topvsiter1}. The fully connected network has the fastest convergence and as the network gets sparser, the convergence rate decreases.

\subsubsection{The Influence of the Step-size on the Convergence Rate}
\label{sec:convssize}

We generate data according to the model of \S\ref{sec:rsrmodel}, where $K = 10$, $N^1 = 200$, $N^0 = \num{2000},$ $D = 50$, $d = 3$, and $\sigma = 0.1$.
Fig.~\ref{fig:cbda1} shows the average recovery error for CBGA-GMS as a function of the number of iterations for $7$ different step-sizes: 10, 30, 50, 100, 150, 200 and the one proposed in \eqref{eq:step_size_practice}, whose value here is 22.5.
The average error of GMS for the total data is included as a baseline.
These results
imply that the convergence rate increases with the step-size. However, additional experiments, not reported in here, indicate that for a very large step-size the algorithm does not converge. We also note that for large step-sizes, the increase of the step-size does not  significantly change the convergence rate, for example, for step-sizes $150$ and $200$ we see almost the same result, while the difference between convergence results is obvious for smaller step-sizes.

\begin{figure}
\centering
\begin{subfigure}{.49\textwidth}
	\includegraphics[width=1.0\linewidth]{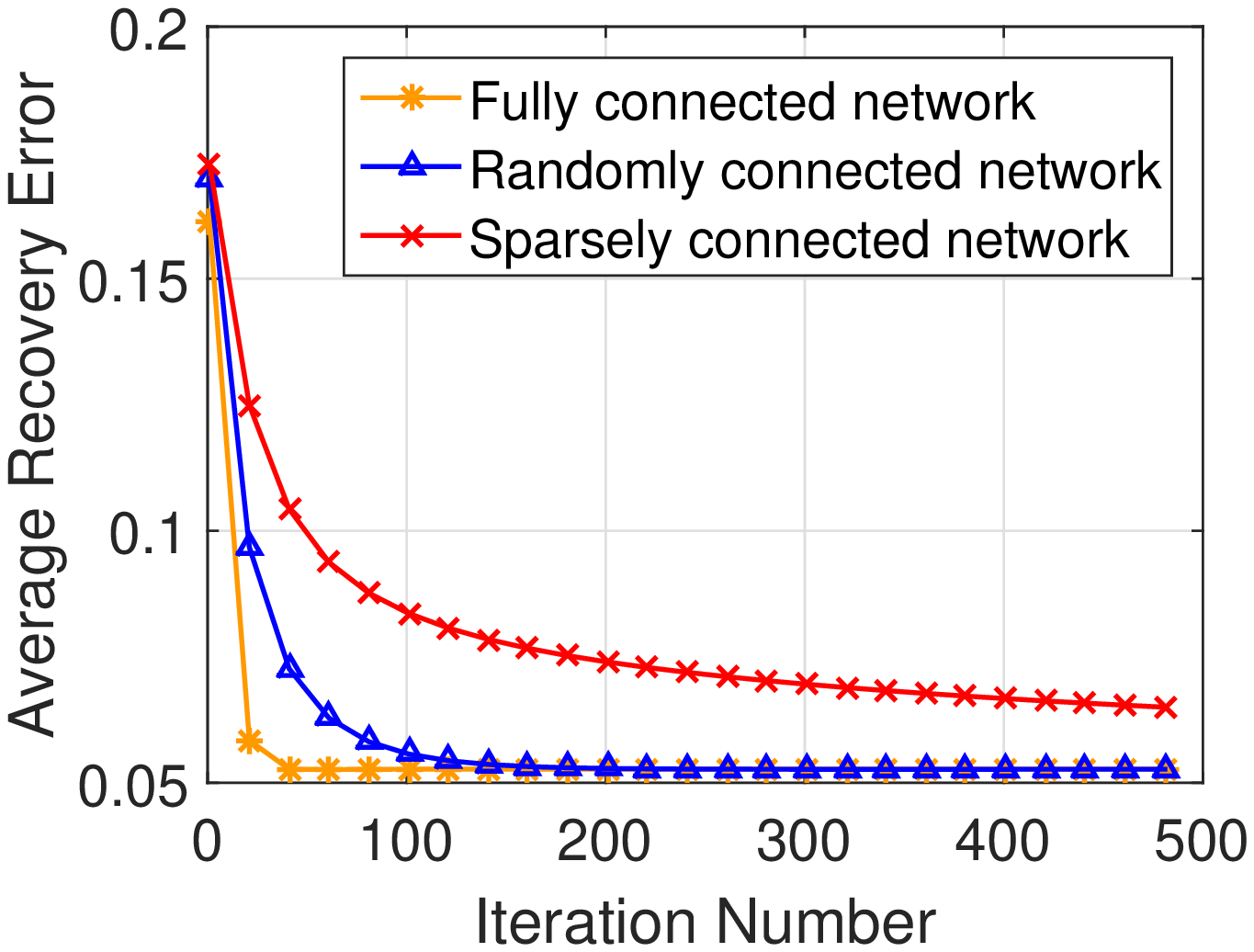}
	\caption{Influence of the network topology on the convergence rate of CBGA-GMS}
	\label{fig:topvsiter1}
\end{subfigure}
\begin{subfigure}{.49\textwidth}
	\includegraphics[width=1.0\linewidth]{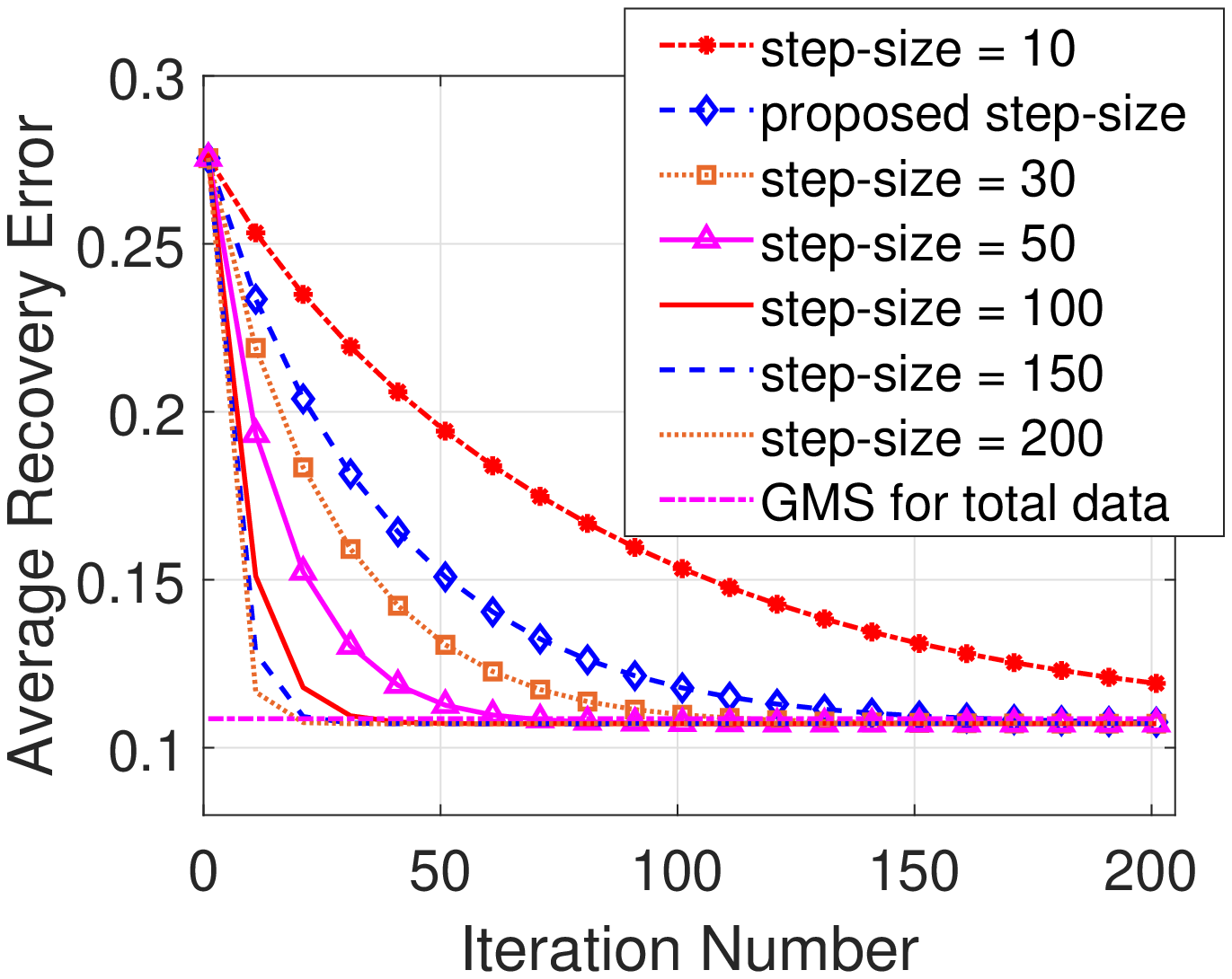}
	\caption{Influence of different step-sizes on the convergence rate of CBGA-GMS}
	\label{fig:cbda1}
\end{subfigure}

\begin{subfigure}{.49\textwidth}
		\includegraphics[width=1.0\linewidth]{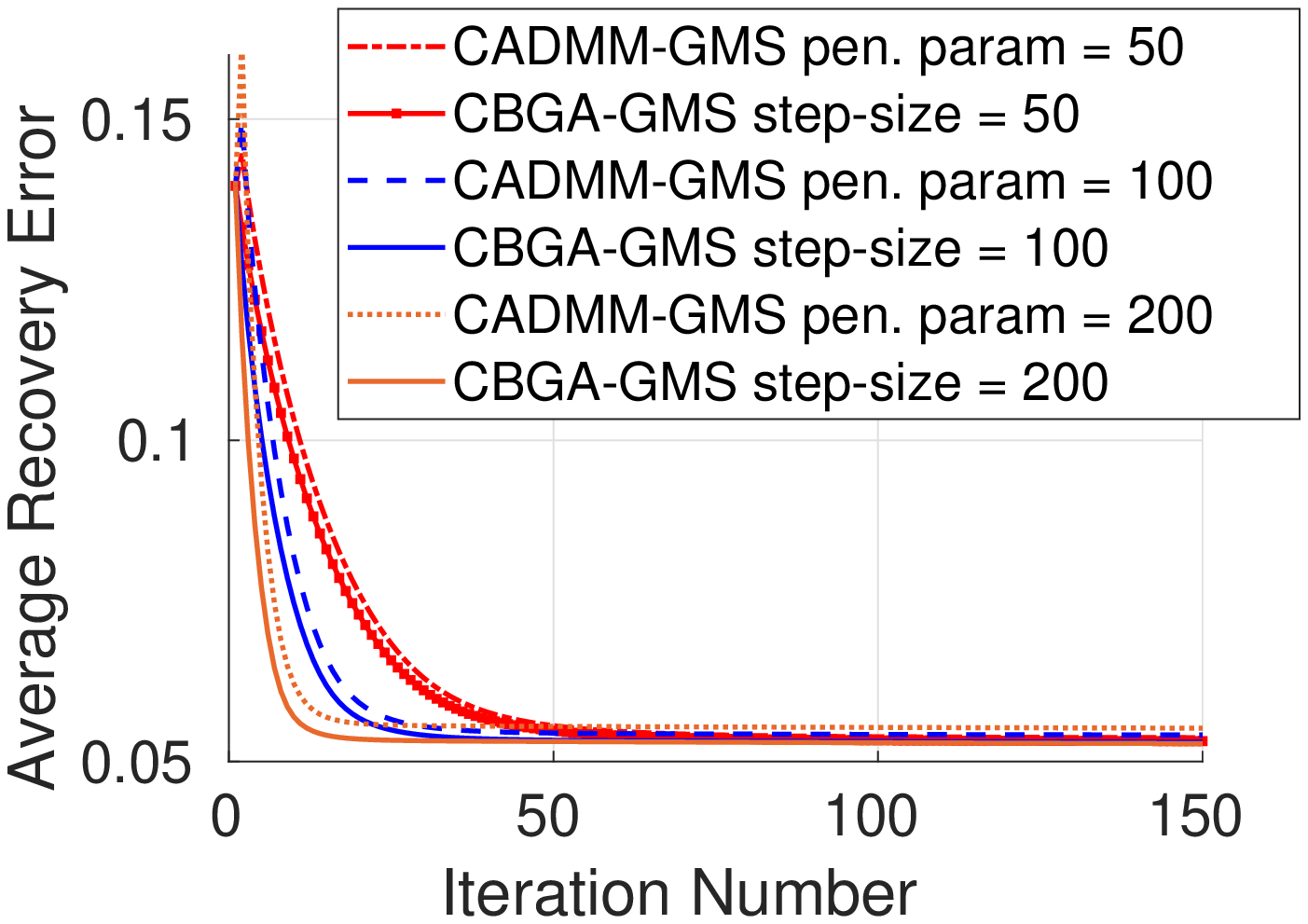}
	\caption{CBGA-GMS vs CADMM-GMS}
	\label{fig:admmvsgr}
\end{subfigure}
\begin{subfigure}{.49\textwidth}
	\includegraphics[width=1.0\linewidth]{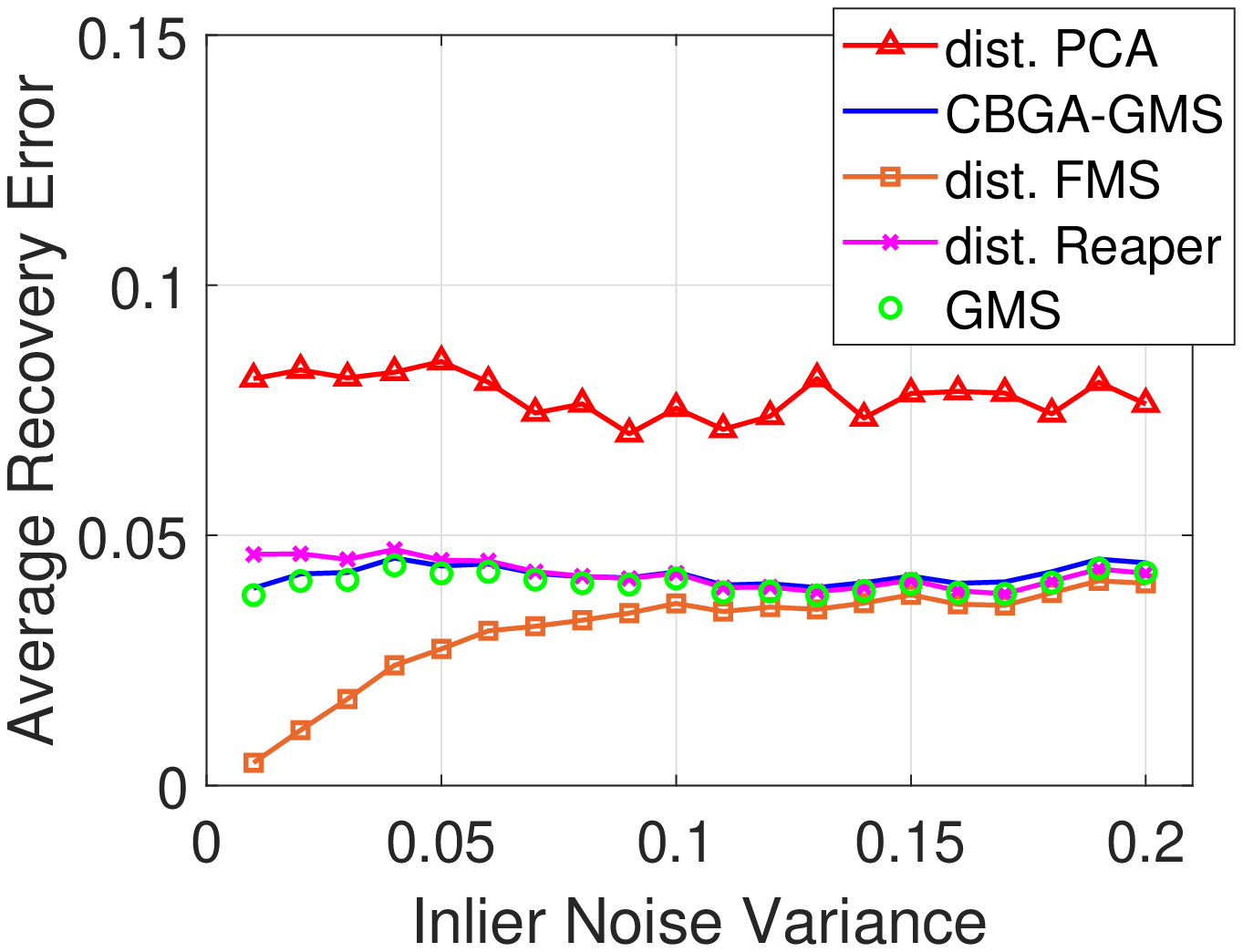}
	\caption{Influence of inlier noise variance on distributed PCA, Reaper, GMS and FMS}
	\label{fig:comp3}
\end{subfigure}

\begin{subfigure}{.49\textwidth}
	\includegraphics[width=1.0\linewidth]{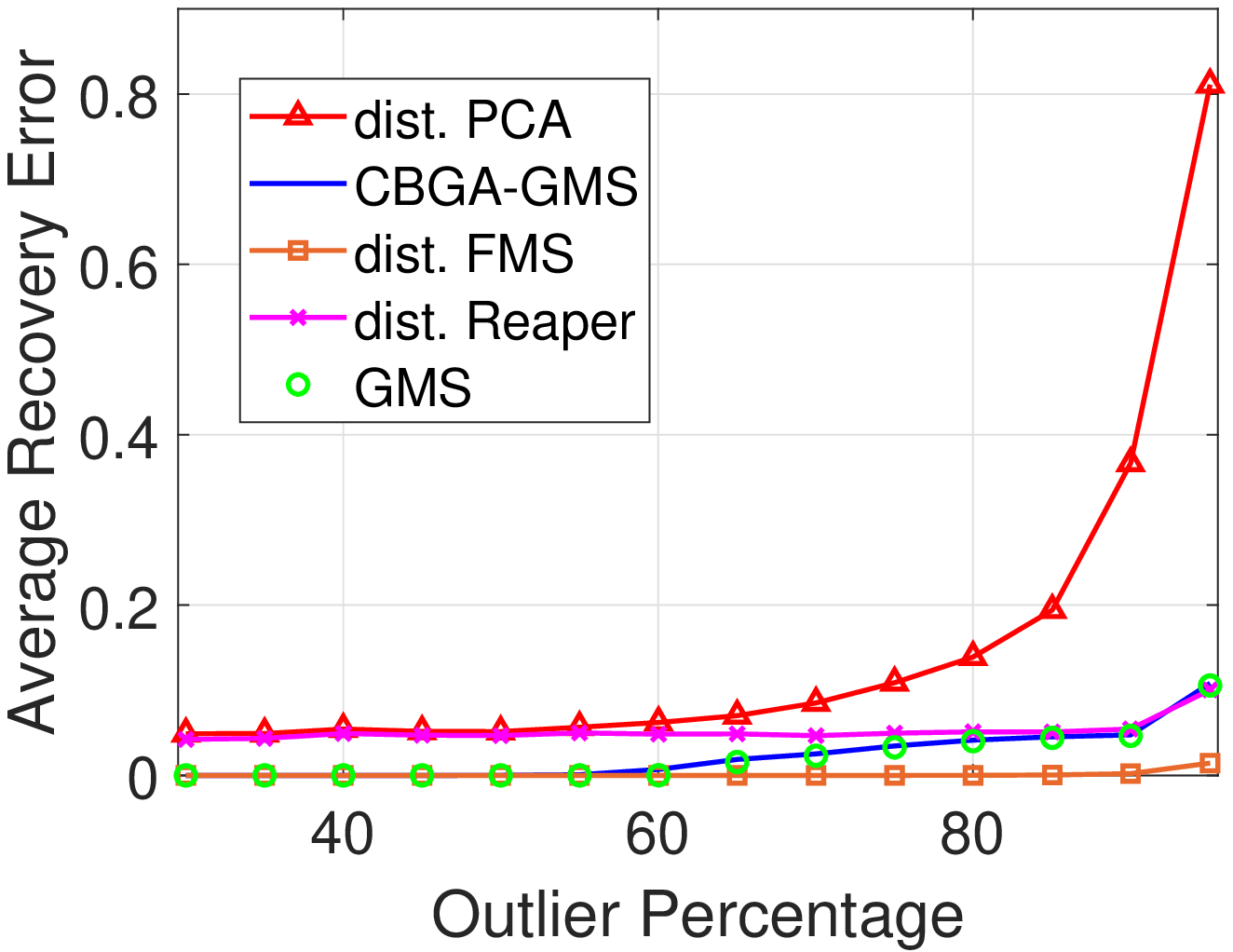}
	\caption{Influence of outlier percentage on distributed PCA, Reaper, GMS, FMS; $\sigma = 0$}
	\label{fig:comp1}
\end{subfigure}
\begin{subfigure}{.49\textwidth}
	\includegraphics[width=1.0\linewidth]{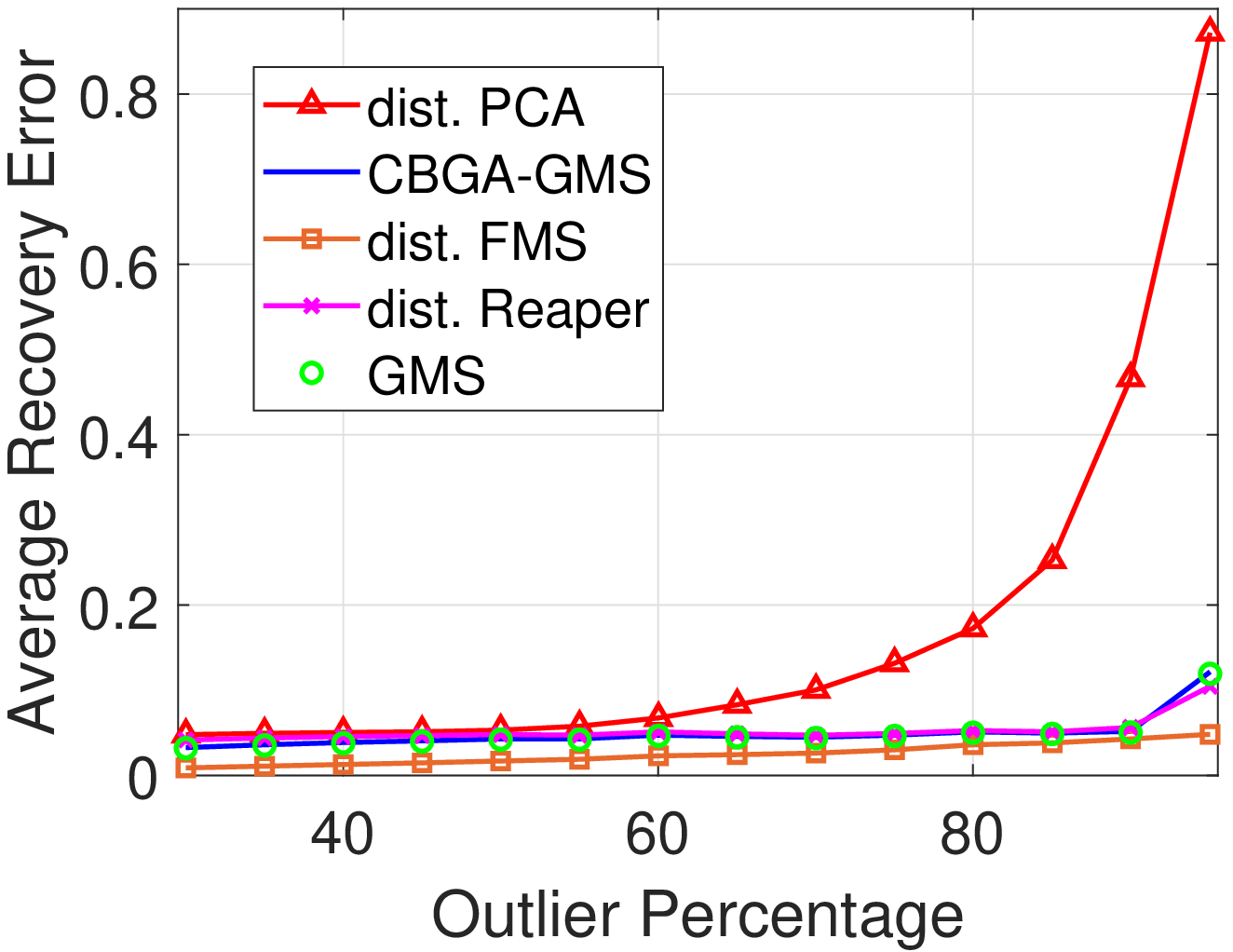}
	\caption{Influence of outlier percentage on distributed PCA, Reaper, GMS, FMS;  $\sigma = 0.05$}
	\label{fig:comp2}
\end{subfigure}
\caption{Demonstration of properties of the distributed algorithms on synthetic data.}
\end{figure}


\subsubsection{Comparing CBGA-GMS with CADMM-GMS}
\label{sec:cadmmvscbga}
A CADMM scheme for GMS, which directly follows~\cite{mateos2010distributed}, is described in \S\ref{sec:cadmm}. It is referred to as CADMM-GMS.
Both  CBGA-GMS and CADMM-GMS are somewhat parallel and it follows from \eqref{eq:analog_cbga} and \eqref{eq:zks} that their corresponding parameters $\mu$ and $\rho$ play similar roles. We compare them using data generated from the model described in \S\ref{sec:rsrmodel}, where $K = 5$, $D = 50$,  $d = 3$, $\sigma = 0.05$, $N^0 = \num{5000}$ and $N^1 = 500$.
We tested the following same values of $\rho$ and $\mu$: 50, 100 and 200. We remark that the $\mu$ proposed in \eqref{eq:step_size_practice} obtained the value 51.1. Since both algorithms performed similarly when using this value and 50, we did not report the performance with this value.
Fig.~\ref{fig:admmvsgr} shows the recovery errors vs.~the number of iterations for both algorithms with these step-sizes. We note that both algorithms converge with very similar speed, where CBGA-GMS converges slightly faster. For the smaller values of $\mu$ and $\rho$ (100 and 200) the algorithms achieve the same recovery error. However, for the larger value of the parameter (300), the recovery error of  CADMM-GMS is slightly higher than the recovery error of CBGA-GMS.


\subsubsection{Comparison of the Proposed Algorithms}
\label{sec:compallmethods}

We compare GMS, the $3$ proposed distributed RSR algorithms and distributed PCA in different settings and report the results in Figs.~\ref{fig:comp3}-\ref{fig:comp2}. Fig.~\ref{fig:comp3} demonstrates how the inlier noise variance $\sigma$ affects the convergence of the four methods. The data for this figure was created according to the model described in \S\ref{sec:rsrmodel}, where $D = 50,$ $d = 3,$ $K = 5,$ $N^0 = \num{3000}$, $N^1 = \num{1000}$ and $\sigma$ varies between $0$ and $0.2$ with increments of $0.01.$ In this figure, for all tested values of $\sigma,$ CBGA-PCA performs the worst and distributed FMS performs the best, where CBGA-GMS and distributed Reaper are somewhat comparable.

Figs.~\ref{fig:comp1} and \ref{fig:comp2} demonstrate the influence of the outlier percentage on the average recovery error of the four distributed methods and GMS (for the total data) with and without inlier noise. We generate data according to the model of \S\ref{sec:rsrmodel}, where $D = 50,$ $d = 3,$ $K = 10,$ $\sigma = 0$ for Fig.~\ref{fig:comp1}, $\sigma = 0.5$ for Fig.~\ref{fig:comp2}, $N^0 = \num{5000}$  and $N^1$ is chosen such that the outlier percentage in the total data varies between $30\%$ to $95\%$ with increments of $5\%$. For both cases ($\sigma = 0$ and $\sigma = 0.05$) and for all percentages of outliers, the recovery error for distributed FMS is the smallest one and that of CBGA-PCA is the largest one.
Figs.~\ref{fig:comp1} and \ref{fig:comp2} also demonstrate that when the data is corrupted with outliers (percentage of outliers higher than $65\%$), the distributed RSR algorithms perform significantly better than distributed PCA. For the case of $\sigma = 0$, 
distributed FMS and CBGA-GMS succeed with exact recovery up to $90\%$ and $55\%$ of outliers respectively, whereas distributed Reaper could not exactly recover the subspace in the tested range.

In Figs.~\ref{fig:comp3}, \ref{fig:comp1} and \ref{fig:comp2}, the recovery errors obtained by CBGA-GMS and GMS are comparable. We remark that the distributed implementations of PCA, Reaper and FMS also obtain similar recovery errors as the non-distributed ones in all of these experiments. However, since these figures are already dense, we do not report the results of the latter non-distributed implementations.


\subsection{Real Data Experiments}
\label{sec:realdata}

Distributed RSR algorithms can be used as a preprocessing step for clustering, classification and regression. We apply our proposed distributed algorithms as a preprocessing step for two different tasks: linear regression, where we use the CTslices dataset $(N=\num{53500}, D=386)$~\cite{Lichman:2013}, and classification (multiclass SVM), where we use the Human Activity Recognition (HAR) dataset  $(N=\num{10299}, D=561)$~\cite{anguita2013public, Lichman:2013}.
For both datasets we apply initial centering by the geometric median and  to ensure full-rank data in all processors we reduce dimension to $D = 150$ by distributed exact PCA (see \S\ref{sec:distpca}). We remark that higher values of reduced dimensions $D$ were also possible. We report the results for one of the processors as they are the same for all of them.

\begin{figure}
\centering
\begin{subfigure}{.49\textwidth}
	\includegraphics[width=1.0\linewidth]{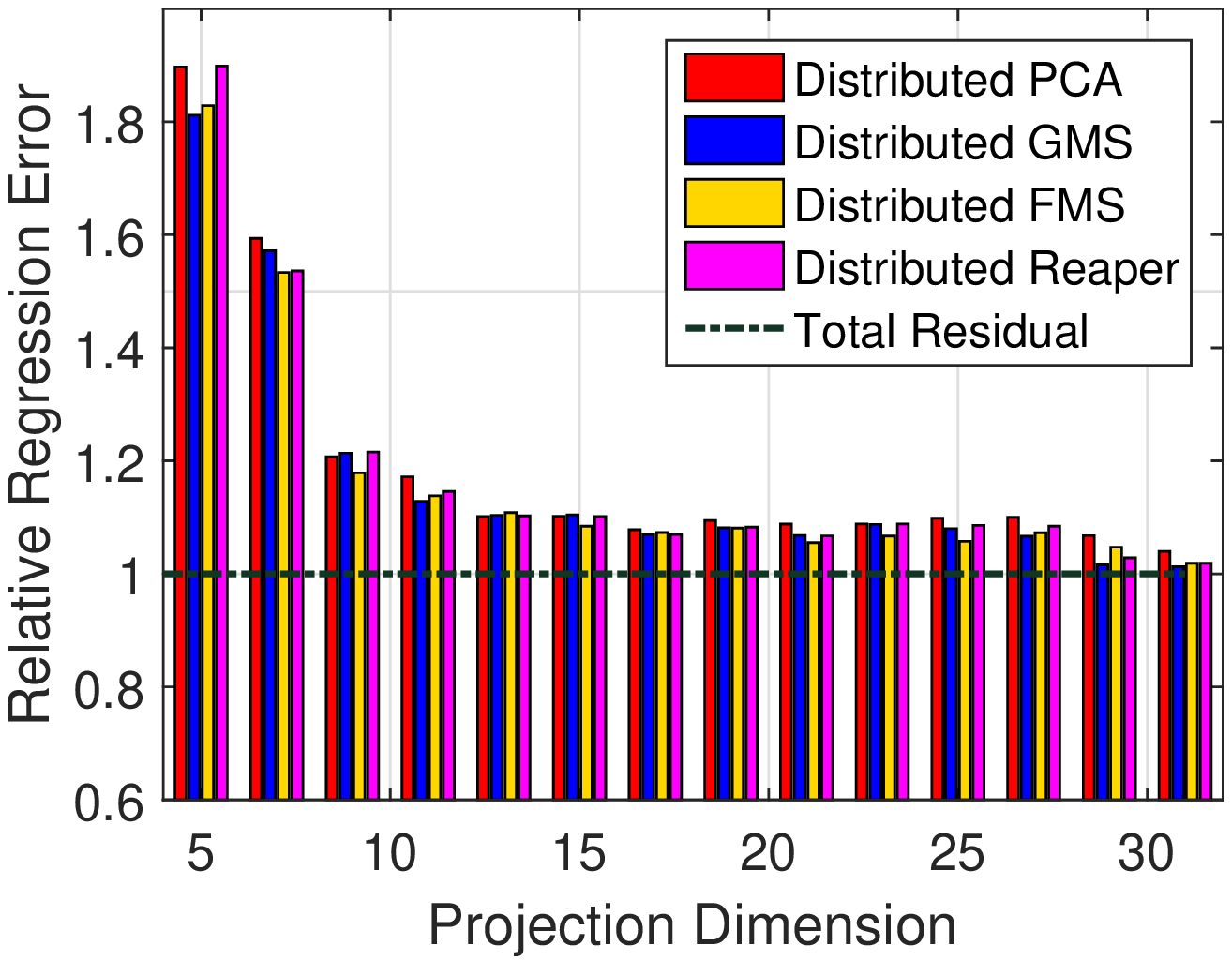}
	\caption{CTslices}
	\label{fig:ctslices}
\end{subfigure}
\begin{subfigure}{.49\textwidth}
	\includegraphics[width=1.0\linewidth]{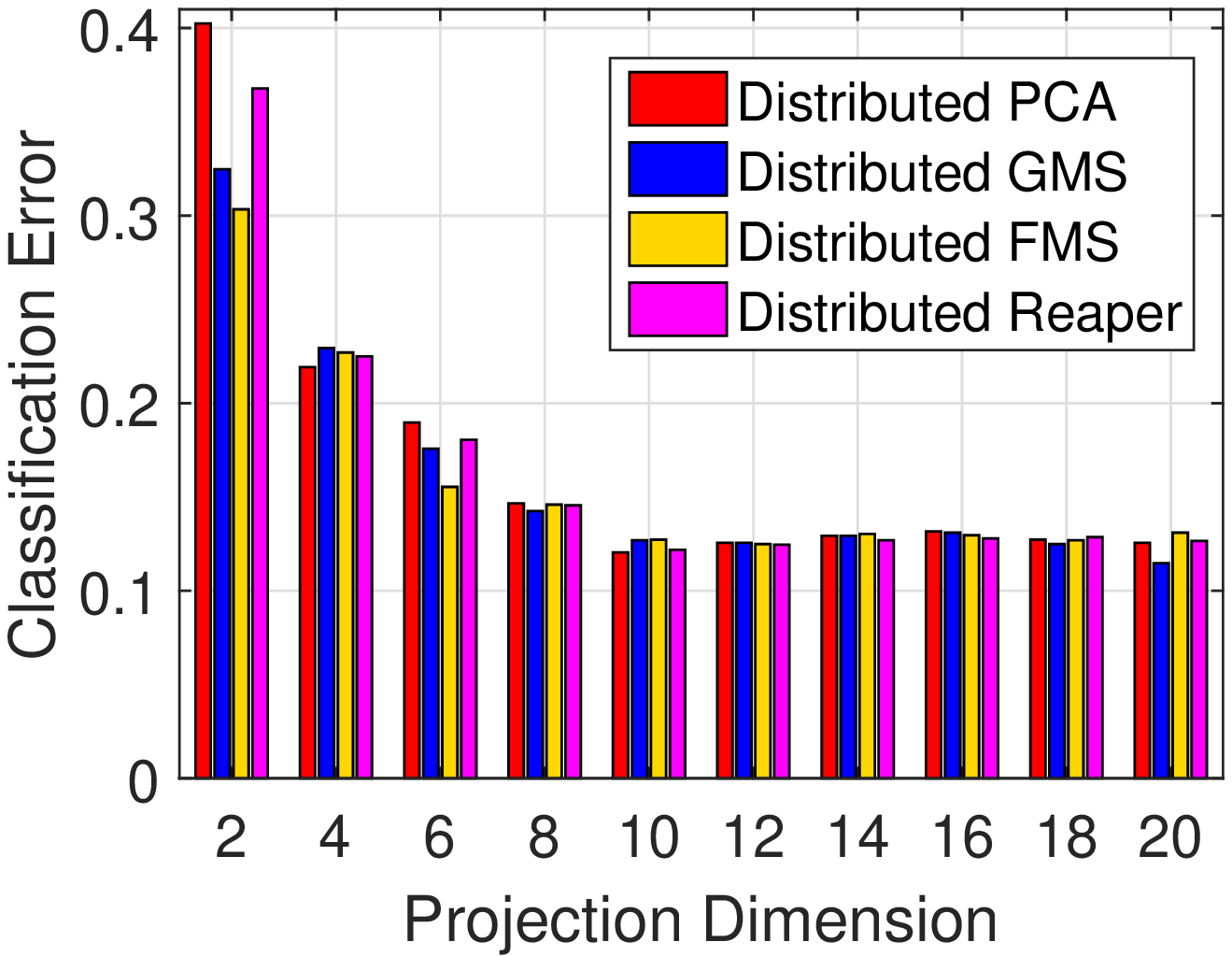}
	\caption{Human Activity Recognition (HAR)}
	\label{fig:hardata}
\end{subfigure}

\caption{Demonstration of the proposed distributed algorithms on two real datasets: CTslices and HAR.}
\end{figure}


For the CTslices data, the algorithms are trained on $\num{50000}$ data points and tested on $\num{3500}$ data points. The training data is divided between $5$ processors, each containing $\num{10000}$ data points. We apply CBGA-PCA, CBGA-GMS, distributed FMS and distributed Reaper to reduce the dimension of the dataset to lie between $5$ and $30.$ We then apply linear least squares regression in the reduced dimension. Fig.~\ref{fig:ctslices} reports the relative regression error for the different projected dimensions. The relative regression error is the regression error for the data with the reduced dimension divided by the relative error for the data in $150$ dimensions. 
We notice that for almost all dimensions, the relative errors of distributed FMS and GMS are lower than those of distributed PCA, and the relative errors of distributed Reaper are either lower or comparable to those of distributed PCA.

For the HAR data, the algorithms are trained on $\num{7352}$ data points and tested on $\num{2947}$ data points. The training data is divided between $8$ processors, each containing $919$ data points. We apply CBGA-PCA, CBGA-GMS, distributed FMS and distributed Reaper to reduce the dimension of the dataset to lie between $2$ and $20.$ We then apply classification in the reduced dimensions. Fig.~\ref{fig:hardata} reports classification error for the different projected dimensions. It demonstrates that in dimension $2,$ the distributed RSR algorithms, in particular, distributed FMS and GMS, have a clear advantage over distributed PCA. In other dimensions, distributed RSR algorithms perform at least as good as distributed PCA.

We comment that for all real datasets, the results of the distributed algorithms are very similar to those of the non-distributed ones. Differences between all distributed and non-distributed implementations may exist when the initial dimension $D$ is large and an initial dimension reduction by OSE is applied (see \S\ref{sec:distpca}). An effect of OSE on the performance of PCA in a distributed setting is documented in \cite{liang2014improved}.




\section*{Acknowledgements}

We thank Amit Singer for his helpful comments on the presentation of this work and his suggestion to demonstrate the ideas of CBGA-GMS on the problem of distributed computation of the geometric median.
This work was supported by NSF awards DMS-09-56072 and DMS-14-18386 and the Feinberg Foundation Visiting Faculty Program Fellowship of the Weizmann Institute of Science.

\appendix


\section{Solutions of Related Problems}
\label{sec:demo}

We first use the idea of CBGA-GMS to solve two simpler problems: distributed computation of the PCA subspace and distributed computation of the geometric median. We then describe a CADMM solution for distributed GMS.

\subsection{Distributed PCA for Arbitrarily Distributed Network}
\label{sec:distpca}

Before describing the CBGA procedure for PCA, we remark that if the dimension $D$ is not high, then the following simple procedure can be applied to solve the problem. One may propagate the local covariance matrices among the network and recover the exact covariance matrix at each processor and use it for PCA computation.  We refer to it as exact distributed PCA. If the dimension $D$ is high,  then it can be reduced by an OSE procedure described below before applying the exact distributed PCA algorithm.

Our proposed CBGA-PCA algorithm is similar to \cite{aduroja2013distributed,valcarcel2010consensus,bertrand2011consensus}, but uses instead the PCA formulation in~\eqref{eq:pcapropmin}.
This formulation leads to a direct solution of the local optimization problem.
In order to apply~\eqref{eq:pcapropmin}, one needs to guarantee that for all $1 \leq k \leq K$, $\cX_k$ has full rank. If $\cX_k$ is rank-deficient, one can reduce its dimension. If the dimension $D$ is high, one can sample an Oblivious Subspace Embedding (OSE) matrix $\mH$ \cite{sarlos2006improved} and instead of $\cX_k$ consider $\mH \cX_k $. One common OSE $\mH$ has only one non-zero entry per row. By taking an appropriate number of rows for  $\mH$, one can assume that the projected data at each node has full rank. If the dimension $D$ is not high, then the exact distributed PCA, or a faster approximate version of it, can be used to reduce the dimension.

Next, we clarify the application of CBGA to \eqref{eq:pcapropmin}.
In view of \S\ref{sec:cbga}, it is sufficient to compute the dual function of \eqref{eq:pcapropmin} at each node, that is, compute for each $1 \leq k \leq K$:
\begin{equation}
d_k(\blamb) = \min \limits _{\mQ \in \bbH} \left( \sum \limits_{\vx \in \cX_k} \Vert\mQ \vx\Vert^2 + \tr(\mA_k \mQ) \right), \text{ where } \mA_k = \sum \limits_{m \in \cE_k} c_{mk} \blamb_m^T.
\label{eq:dpca2}
\end{equation}
Appendix \ref{append:lem1proof_whole} guarantees the unique minimizer of \eqref{eq:dpca2} and explains how to find it.

Since the minimized function in \eqref{eq:pcapropmin} is strongly convex, it follows from \cite{kakade2012applications} that its dual function $d(\blamb) = \sum_{k = 1}^K d_k(\blamb)$, where $d_k(\blamb)$ is defined in \eqref{eq:dpca2}, is Lipschitz smooth. This implies that the CBGA algorithm for PCA converges to the PCA solution for the total data with rate $O(1/t).$ The complexity of CBGA-PCA is $O(T_{CBGA} \times N_{\max} \times D^2)$ (see \S\ref{append:find_min}).
This algorithm is not optimal in terms of complexity and communication. Indeed, the distributed exact PCA algorithm described above is simpler and achieves the exact PCA subspace. Nevertheless, we find this CBGA-PCA interesting for two reasons. First of all, it is similar to previous attempts \cite{aduroja2013distributed,valcarcel2010consensus,bertrand2011consensus} that did not clarify how to solve the local dual problem. Second of all, CBGA-PCA simply demonstrates the main idea of  the more complicated CBGA-GMS procedure.


\subsection{Distributed Geometric Median}
\label{sec:geommedian}

The geometric median of a discrete dataset $\cX \subset \bbR^D$ is defined as
\begin{equation}
\label{eq:g_median}
\argmin \limits_{\vy \in \bbR^D} \sum \limits_{\vx \in \cX} \Vert \vx - \vy \Vert.
\end{equation}
Weiszfeld's algorithm~\cite{weiszfeld1937sur} is a common numerical approach to approximating  \eqref{eq:g_median} within a sufficiently small error. It applies an iteratively reweighted least squares (IRLS) procedure. However,  if in one of the iterations, the estimate coincides with one of the data points,  then Weiszfeld's algorithm fails to converge to the geometric median. To avoid this issue, we consider the following regularized version of \eqref{eq:g_median}:
\begin{equation}
\label{eq:reg_g_median}
\argmin \limits_{\vy \in \bbR^D} \sum \limits_{\vx \in \cX, \Vert \vx - \vy \Vert \ge \delta}  \Vert \vx - \vy \Vert + \sum \limits_{\vx \in \cX, \Vert \vx - \vy \Vert < \delta} \left( \frac{\Vert \vx - \vy \Vert^2}{2 \delta} + \frac{\delta}{2} \right),
\end{equation}
where $\delta > 0$ is a small regularization parameter. We can solve \eqref{eq:reg_g_median} by the generalized Weiszfeld's algorithm~\cite[\S 4]{chanmul}. This algorithm runs as follows: it starts with an initial guess $\vy_0 \in \bbR^D,$ and at iteration $s \ge 1$ it computes
$$\vy_s = \sum \limits_{\vx \in \cX} \frac{\vx}{\max \left( \Vert \vx - \vy_{s - 1} \Vert, \delta \right)} \biggm/ \sum \limits_{\vx \in \cX} \frac{1}{\max \left( \Vert \vx - \vy_{s - 1} \Vert, \delta \right)}.$$
The  sequence $\{\vy_s\}_{s \in \bbN}$ $r$-linearly converges to the solution of \eqref{eq:reg_g_median} (see~\cite{chanmul}).

We assume a dataset $\cX$ with $\{ \cX_k\}_{k=1}^K$ distributed at $K$ nodes, and distributedly compute  the regularized geometric median of $\cX$ by CBGA. In view of \S\ref{sec:cbga}, it is enough to compute the dual function of \eqref{eq:reg_g_median} at each node, that is, compute for each $1 \le k \le K$
\begin{equation}
\label{eq:distgeommeddual}
d_k(\blam) = \min \limits _{\vy \in \bbR^D} \sum \limits_{\substack{\vx \in \cX_k, \\ \Vert \vx - \vy \Vert \ge \delta}}  \Vert \vx - \vy \Vert + \sum \limits_{\substack{\vx \in \cX_k, \\ \Vert \vx - \vy \Vert < \delta}} \left( \frac{\Vert \vx - \vy \Vert^2}{2 \delta} + \frac{\delta}{2} \right) + \sum_{m \in \cE_k} c_{mk} \blam_m^T \vy,
\end{equation}
where $\blam = [\blam_1^T, \dots, \blam_M^T ]^T \in \bbR^{MD}.$ We suggest solving  \eqref{eq:distgeommeddual} by IRLS as follows: start with an initial guess $\vy_k^0 \in \bbR^D$ and at iteration $s \ge 1$ compute
$$\vy_k^s = \left( 2 \sum \limits_{\vx \in \cX_k} \frac{\vx}{\max \left( \Vert \vx - \vy_k^{s - 1} \Vert, \delta \right)} - \sum_{m \in \cE_k} c_{mk} \blam_m \right) \biggm/ \left( 2 \sum \limits_{\vx \in \cX_k} \frac{1}{\left( \Vert \vx - \vy_k^{s - 1} \Vert, \delta \right)} \right).$$
The convergence of $\{\vy_k^s\}_{s \in \bbN}$ follows from that of IRLS (see \cite{chanmul}) and CBGA (see \S \ref{sec:cbga}).

\subsection{CADMM Solution for the Distributed GMS Problem}
\label{sec:cadmm}

We formulate in \Cref{algo:cadmmgms} a CADMM solution of the distributed GMS problem by following the CADMM scheme of \cite{mateos2010distributed}.
The solution of the local problem is discussed in \S\ref{sec:sol_eq_admm}.

\begin{algorithm}[H]
\caption{CADMM implementation for distributed GMS (CADMM-GMS)}
\label{algo:cadmmgms}

\begin{algorithmic}
 \STATE \textbf{Input:} Network with $K$ nodes, $\cX_1, \dots, \cX_K:$ datasets in the $K$ nodes, $T_{CADMM}$, $T_{GMS}$: stopping iteration numbers, $\delta$: regularization parameter (default: $10^{-10}$) and $\rho$: penalty parameter for CADMM

\STATE \textbf{Set:}   For all $1 \le k \le K$, $\mZ_k^0 = \v0$ and $\mQ_k^0$ is the solution of \Cref{algo:solveequation1} with input $\cX_k,$ $\mA_k = \v0,$ $T_{GMS}$ and $\delta$

\FOR{$s = 1:T_{CADMM}$}
	\STATE
\begin{itemize}
	\item For $1 \le k \le K$ update $\mZ_k^s$ by
	\begin{equation}
\label{eq:zks}
		\mZ_k^s = \mZ_{k}^{s-1} + \rho \sum_{j \in \cN_k} \left( \mQ_k^{s-1} - \mQ_j^{s-1} \right)
	\end{equation}	
		
	\item For $1 \le k \le K$ apply the algorithm described in \S\ref{sec:sol_eq_admm} to solve
\begin{equation}
	\label{eq:admmlocalobj}
	\mQ_k^{s} = \argmin \limits_{\mQ_k \in \bbH} \tilde{G}_{ADMM}(\mQ_k), \text{ where }
\end{equation}
$\tilde{G}_{ADMM}(\mQ_k) = F_k(\mQ_k) + \tr \left( \mQ_k^T \mZ_k^s \right) + \rho \sum\limits_{j \in \cN_k} \left\Vert \mQ_k - \frac{\mQ_k^{(s-1)} + \mQ_j^{(s-1)}}{2}  \right\Vert_2^2$
\end{itemize}
\ENDFOR
\RETURN $L_k := $ the span of the bottom $d$ eigenvectors of $\mQ_k^{T_{CADMM}}, 1 \le k \le K$
\end{algorithmic}
\end{algorithm}

\subsubsection{Algorithm for computing the solution of \eqref{eq:admmlocalobj}}
\label{sec:sol_eq_admm}

We propose an iterative scheme for solving \eqref{eq:admmlocalobj}, which is almost identical to \Cref{algo:solveequation1}, but at each iteration $s$ instead of finding the trace one solution of \eqref{eq:lyappsd}, it finds the trace one solution of the following Lyapunov equation in $\mP$:
\begin{multline}
\mP \left( \sum \limits_{\vx \in \cX_k} \frac{\vx \vx^T}{2 \max(\Vert\mQ \vx\Vert, \delta)} + \rho |\cN_k| \mI \right) + \left( \sum \limits_{\vx \in \cX_k} \frac{\vx \vx^T}{2 \max(\Vert\mQ \vx\Vert, \delta)} + \rho |\cN_k| \mI \right) \mP \\ + \mZ_k^s - \rho \sum_{j \in \cN_k} \left( \mQ_k^{s-1} + \mQ_j^{s-1} \right) = c \mI.
\label{eq:cadmmlyapeq}
\end{multline}
Here, $c$ is chosen so that $\tr(\mP)=1$ and its existence is guaranteed by Lemma~\ref{lem:lemma1}. The convergence theory for this iterative algorithm is the same as the one developed for \Cref{algo:solveequation1}.

\section{Supplementary Details}
\label{append:proofs}
\subsection{On the Minimizer of \eqref{eq:dpca2}}
\label{append:lem1proof_whole}
We first state the main result of this section:
\begin{lemma}
\label{lem:pcalem1}
If $\cX_k \subset \bbR^D$ is full rank and $\mA_k \in \cS$, then the minimizer of \eqref{eq:dpca2} is unique. Furthermore, there exists
a unique $c' \in \bbR$ such that this minimizer is the unique solution of the following equation with $c=c'$
\begin{equation}
\mQ \left( \sum \limits_{\vx \in \cX_k} \vx \vx^T \right) + \left( \sum \limits_{\vx \in \cX_k} \vx \vx^T \right) \mQ + \mA_k = c \mI.
\label{eq:dpcalyap}
\end{equation}
\end{lemma}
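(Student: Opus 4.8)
The plan is to recognize that \eqref{eq:dpca2} is a strictly convex minimization over the affine slice $\bbH = \{\mQ \in \cS : \tr(\mQ) = 1\}$, so that a unique minimizer exists and is characterized by first-order optimality conditions, which reduce to a Lyapunov equation with a trace-normalization Lagrange multiplier. First I would write $\mB_k = \sum_{\vx \in \cX_k} \vx\vx^T$, which is positive definite since $\cX_k$ is full rank, and note that the objective $g_k(\mQ) = \tr(\mQ \mB_k \mQ) + \tr(\mA_k \mQ)$ (using $\sum_{\vx} \Vert \mQ \vx \Vert^2 = \tr(\mQ \mB_k \mQ)$ and symmetry of $\mQ$) is a strictly convex quadratic in $\mQ$ on $\cS$, because its Hessian is the map $\mH \mapsto \mH \mB_k + \mB_k \mH$, which is positive definite on $\cS$ (if $\mH \mB_k + \mB_k \mH = \v0$ then, testing against $\mH$, $2\tr(\mH \mB_k \mH) = 0$, and $\mB_k \succ \v0$ forces $\mH = \v0$). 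Restricting a strictly convex coercive quadratic to the nonempty affine set $\bbH$ yields existence and uniqueness of the minimizer.

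Next I would derive the optimality condition. The Lagrangian is $g_k(\mQ) - c(\tr(\mQ) - 1)$ for a multiplier $c \in \bbR$; setting the gradient (in $\cS$) to zero gives $\mQ \mB_k + \mB_k \mQ + \mA_k = c \mI$, which is exactly \eqref{eq:dpcalyap}. So the minimizer solves \eqref{eq:dpcalyap} for some $c = c'$. Conversely, for each fixed $c$, the Lyapunov/Sylvester equation $\mQ \mB_k + \mB_k \mQ = c\mI - \mA_k$ has a unique solution $\mQ(c) \in \cS$ (standard: since $\mB_k$ and $-\mB_k$ share no eigenvalue, as $\mB_k \succ \v0$; symmetry of $\mQ(c)$ follows because the right-hand side is symmetric and the solution map is linear and injective on $\bbR^{D\times D}$, so it maps the symmetric right-hand side to a symmetric solution). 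The map $c \mapsto \mQ(c)$ is affine, and $\tr(\mQ(c))$ is an affine function of $c$ with positive slope: writing $\mQ(c) = c\,\mQ_1 + \mQ_0$ where $\mQ_1$ solves $\mQ_1 \mB_k + \mB_k \mQ_1 = \mI$, we have $\tr(\mQ_1) = \tfrac12 \tr(\mB_k^{-1})$ after testing against $\mB_k^{-1}$ — wait, more directly: $2\tr(\mQ_1 \mB_k) = \tr(\mI) = D$, and separately pairing with $\mB_k^{-1}$ gives $\tr(\mQ_1) \cdot$ (something positive); in any case $\mQ_1 \succ \v0$ since it solves a Lyapunov equation with positive-definite data, so $\tr(\mQ_1) > 0$. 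Hence there is a unique $c'$ with $\tr(\mQ(c')) = 1$, and the resulting $\mQ(c')$ is the unique solution of \eqref{eq:dpcalyap} in $\bbH$; by the optimality characterization it is the minimizer of \eqref{eq:dpca2}.

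The only genuinely delicate point is confirming that the Lyapunov equation produces a \emph{symmetric} solution so that the KKT stationarity condition computed on the full space $\bbR^{D \times D}$ (rather than on $\cS$) is consistent; this follows from uniqueness of the Sylvester solution together with symmetry of all data ($\mB_k = \mB_k^T$, $\mA_k = \mA_k^T$, $\mI = \mI^T$), since transposing the equation shows $\mQ(c)^T$ solves the same equation. An alternative, cleaner route that avoids this subtlety is to work entirely within $\cS$: the operator $\mathcal{T}: \mH \mapsto \mH \mB_k + \mB_k \mH$ is a self-adjoint positive-definite linear operator on the inner-product space $(\cS, \langle \cdot, \cdot \rangle_{\mathrm{tr}})$, hence invertible there, so $\mQ(c) = \mathcal{T}^{-1}(c\mI - \mA_k) \in \cS$ directly, and the trace condition pins down $c'$ as above. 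Everything else is the routine strictly-convex-quadratic-over-an-affine-set argument. I would present the proof in the order: (i) strict convexity and existence/uniqueness of the minimizer over $\bbH$; (ii) KKT stationarity $\Rightarrow$ the minimizer solves \eqref{eq:dpcalyap} for some $c'$; (iii) for each $c$, unique solution $\mQ(c) \in \cS$ and affine dependence with positive trace-slope, giving the unique $c'$; (iv) conclude the identification.
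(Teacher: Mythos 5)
Your proposal is correct and follows essentially the same route as the paper: strict (strong) convexity of the quadratic objective on $\bbH$ from $\sum_{\vx\in\cX_k}\vx\vx^T\in\cSpp$ gives uniqueness of the minimizer, the Lyapunov/Sylvester equation with a trace-normalizing multiplier is the first-order optimality condition, and the affine, strictly increasing dependence of $\tr(\mQ(c))$ on $c$ (the paper's \Cref{lem:lemma1}, with slope $\tr(\mX^{-1})/2$, which is exactly your $\tr(\mQ_1)=\tr(\mB_k^{-1})/2>0$) pins down the unique $c'$. The only cosmetic difference is the order of the last two steps (the paper first fixes $c'$ via the trace condition and then verifies the restricted derivative vanishes, whereas you go KKT first), which does not change the substance.
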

Section \ref{append:prlemmaslem3} states and proves a lemma about the solution of the above Lyapunov equation and \S\ref{append:lem1proof} then uses this latter lemma to conclude \cref{lem:pcalem1}. At last, \S\ref{append:find_min} briefly discusses the computation of the minimizer of \eqref{eq:dpca2}.

\subsubsection{Preliminary lemma}
\label{append:prlemmaslem3}
We verify the following lemma.
\begin{lemma}
If $c \in \bbR,$ $\mX \in \cSpp$ and $\mA \in \cS$, then the following Lyapunov equation
\begin{equation}
\mQ \mX + \mX \mQ + \mA = c \mI
\label{eq:lyapinc}
\end{equation}
has a unique solution in $\mQ \in \cS$. Furthermore, $\tr(\mQ)$ is an increasing linear function of $c$ with slope $\tr(\mX^{-1})/2.$
\label{lem:lemma1}
\end{lemma}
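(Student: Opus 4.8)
The plan is to diagonalize $\mX$ and reduce the Lyapunov equation to a decoupled system of scalar equations, one per matrix entry. First I would set $\mB = c\mI - \mA$, which is symmetric, so that \eqref{eq:lyapinc} reads $\mQ\mX + \mX\mQ = \mB$. Since $\mX \in \cSpp$, take its spectral decomposition $\mX = \mU\blamb\mU^T$ with $\mU$ orthogonal and $\blamb = \diag(\lambda_1,\dots,\lambda_D)$, $\lambda_i > 0$. Conjugating by $\mU$ and writing $\tilde{\mQ} = \mU^T\mQ\mU$ and $\tilde{\mB} = \mU^T\mB\mU$, the equation becomes $\tilde{\mQ}\blamb + \blamb\tilde{\mQ} = \tilde{\mB}$, which in coordinates is $(\lambda_i + \lambda_j)\tilde{\mQ}_{ij} = \tilde{\mB}_{ij}$ for all $i,j$. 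Because $\lambda_i + \lambda_j > 0$, this is uniquely solved by $\tilde{\mQ}_{ij} = \tilde{\mB}_{ij}/(\lambda_i+\lambda_j)$; since $\tilde{\mB}$ is symmetric so is $\tilde{\mQ}$, hence $\mQ = \mU\tilde{\mQ}\mU^T \in \cS$ is the unique solution of \eqref{eq:lyapinc} within $\cS$ (and in fact within all of $\bbR^{D\times D}$, by applying the same scalar argument to the off-diagonal pairs without imposing symmetry).

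For the trace statement I would simply read off the diagonal entries: $\tr(\mQ) = \tr(\tilde{\mQ}) = \sum_{i=1}^D \tilde{\mQ}_{ii} = \sum_{i=1}^D \tilde{\mB}_{ii}/(2\lambda_i)$. Writing $\tilde{\mA} = \mU^T\mA\mU$, we have $\tilde{\mB}_{ii} = c - \tilde{\mA}_{ii}$, so
\[
\tr(\mQ) = c\sum_{i=1}^D \frac{1}{2\lambda_i} \;-\; \sum_{i=1}^D \frac{\tilde{\mA}_{ii}}{2\lambda_i} = \frac{\tr(\mX^{-1})}{2}\,c \;-\; \frac{1}{2}\tr(\blamb^{-1}\tilde{\mA}),
\]
using $\sum_i 1/\lambda_i = \tr(\blamb^{-1}) = \tr(\mX^{-1})$. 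This is an affine function of $c$ with slope $\tr(\mX^{-1})/2$, which is strictly positive because $\mX^{-1} \in \cSpp$; hence $\tr(\mQ)$ is strictly increasing in $c$, as claimed.

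There is no substantial obstacle here: this is the classical solvability argument for a Lyapunov equation whose coefficient matrix is positive definite, so that $\blamb$ and $-\blamb$ share no eigenvalue. The only points requiring a little care are (i) noting that symmetry of $\mB$ forces the solution to be symmetric, so that existence and uniqueness genuinely hold in $\cS$ as stated rather than merely in a larger space, and (ii) keeping the bookkeeping straight when separating the $c$-dependent part of $\tr(\mQ)$ from the $\mA$-dependent part. Both are routine, so the write-up should be short.
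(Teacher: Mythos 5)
Your proof is correct, but it takes a different route from the paper's. You prove everything from scratch by diagonalizing $\mX = \mU \blamb \mU^T$ and decoupling the Lyapunov equation into the scalar equations $(\lambda_i+\lambda_j)\tilde{\mQ}_{ij}=\tilde{\mB}_{ij}$, which simultaneously yields existence, uniqueness, symmetry of the solution, and the explicit affine formula $\tr(\mQ) = c\,\tr(\mX^{-1})/2 - \tfrac{1}{2}\tr(\blamb^{-1}\tilde{\mA})$. The paper instead cites the classical existence and uniqueness result for Lyapunov equations with positive definite coefficient (Bhatia, p.~107) and then obtains the slope by a shorter difference trick: subtracting the equations for $c_1$ and $c_2$ gives $(\mQ_1-\mQ_2)\mX + \mX(\mQ_1-\mQ_2) = (c_1-c_2)\mI$, whose unique solution is visibly $(c_1-c_2)\mX^{-1}/2$, and taking traces gives the slope $\tr(\mX^{-1})/2$. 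Your version buys self-containedness and an explicit solution formula (and makes the symmetry of the solution transparent rather than inherited from the cited theorem); the paper's version is shorter and isolates exactly the $c$-dependence without ever writing the solution down. Both establish the same strictly positive slope, so the lemma's conclusion is identical either way.
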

\begin{proof}
The existence and uniqueness of the solution of \eqref{eq:lyapinc} is well-known~\cite[page 107]{lyapbhat}. We thus only need to show that $\tr(\mQ)$ is an increasing linear function of $c$. Assume that $\mQ_1$ and $\mQ_2$ are the solutions of \eqref{eq:lyapinc} corresponding to $c_1$ and $c_2,$ that is,
\begin{equation}
\mQ_1 \mX + \mX \mQ_1 + \mA = c_1 \mI \text{ and } \mQ_2 \mX + \mX \mQ_2 + \mA = c_2 \mI.
\label{eq:lyaplem}
\end{equation}
Subtracting the two equations in \eqref{eq:lyaplem}, results in
\begin{equation}
(\mQ_1 - \mQ_2) \mX + \mX (\mQ_1 - \mQ_2) = (c_1 - c_2) \mI,
\label{eq:lyapincdif}
\end{equation}
whose unique solution is $(\mQ_1 - \mQ_2) = (c_1 - c_2) \mX^{-1}/2$. 
By taking traces of both sides of the solution, we get that $(\tr(\mQ_1) - \tr(\mQ_2))/(c_1 - c_2) = \tr(\mX^{-1})/2>0$.
\end{proof}

\subsubsection{Proof of \Cref{lem:pcalem1}}
\label{append:lem1proof}
Since $\cX_k$ is full rank, $\sum_{\vx \in \cX_k} \vx \vx^T \in \cSpp.$
Hence the minimized function in  \eqref{eq:dpca2} is strongly convex and its minimizer is unique.

We note that \eqref{eq:dpcalyap} is a Lyapunov equation in $\mQ.$ \Cref{lem:lemma1} implies that there is a unique value $c'$ for which the unique solution of \eqref{eq:dpcalyap} has trace $1.$ We denote this solution by $\mQ'.$ Next, we show that $\mQ'$ is the minimizer of \eqref{eq:dpca2}. The following two facts: $\sum_{\vx \in \cX_k} \Vert\mQ \vx_k\Vert^2 + \tr(\mA_k \mQ) = \sum_{\vx \in \cX_k} \tr (\mQ \vx_k \vx_k^T \mQ) + \tr(\mA_k \mQ)$ for $\mQ \in \bbH$ and $\tr(\mQ) = 1$ for $\mQ \in \bbH,$ imply the same minimizer for \eqref{eq:dpca2} and
\begin{equation}
\label{eq:changedobjective}
\min_{\mQ \in \bbH} l(\mQ), \text{ where } l(\mQ) = \sum_{\vx \in \cX_k} \tr (\mQ \vx_k \vx_k^T \mQ) + \tr(\mA_k \mQ) - c' \tr(\mQ).
\end{equation}
Since $l(\mQ)$ is convex on $\bbH$, we conclude that $\mQ'$ minimizes \eqref{eq:changedobjective} by showing that the derivative of $l(\mQ)$ at $\mQ'$, when restricted to $\bbH$,  is $\v0$:
\begin{equation}
\frac{d}{d \mQ} l(\mQ) \biggr\vert_{\mQ = \mQ'} = \mQ' \left( \sum_{\vx \in \cX_k} \vx \vx^T \right) + \left( \sum_{\vx \in \cX_k} \vx \vx^T \right) \mQ' + \mA_k - c' \mI = \v0. \tag*{\QED}
\end{equation}

\subsubsection{Computing the Minimizer of \eqref{eq:dpca2}}
\label{append:find_min}
In view of \Cref{lem:lemma1} we compute $c'$ and the corresponding solution of \eqref{eq:lyaplem} as follows.
We solve \eqref{eq:dpcalyap} with $c=0$ to obtain $\mQ_* \in \cS$. We  then use $\tr(\mQ_*)$ and the slope $\tr(\mX^{-1})/2$, where
$\mX = \sum_{\vx \in \cX_k} \vx \vx^T$, to find $c'$. 
Therefore, computing this minimizer requires computing $\mX$, which costs $O (N_{\max} \times D^2)$, and solving two Lyapunov equations, which costs $O(D^3)$ (see~\cite{bartels1972solution}).

\subsection{Proof of \cref{lem:lemmapsd}}

\label{append:lem2}

Let $\mX = \sum _{i = 1}^N \vx_i \vx_i^T / \left( 2 \max(\Vert\mQ \vx_i\Vert, \delta) \right)$ and 
note that $\mX \in \cSpp$.
 This observation and \Cref{lem:lemma1} imply that there is a unique value $c \in \bbR$ for which \eqref{eq:lyappsd} has a unique solution in $\bbH$.
We will show that $c > \lambda_1(\mA)$, equivalently $\mA - c\mI \preceq \boldsymbol{0}$, and thus in view of \cite[page 107]{lyapbhat}, this solution is in $\cSpp$.

To get this estimate, we rewrite \eqref{eq:lyappsd} as $\mP + \mX \mP \mX^{-1} + \mA \mX^{-1} = c \mX^{-1}.$ Applying trace to both sides and using the following facts: $\tr(\mP) = 1,$ $\tr(\mX \mP \mX^{-1}) = \tr(\mX^{-1} \mX \mP) = 1$ and $\tr(\mA \mX^{-1}) \ge \lambda_D(\mA) \tr(\mX^{-1})$ yields $c \ge 2 / \tr(\mX^{-1}) + \lambda_D(\mA).$

Let $\mX_{*} = \sum _{i = 1}^N \vx_i \vx_i^T / \left( 2 \max(\Vert\vx_i\Vert, \delta) \right).$ Since $\mQ \in \cSp \cap \bbH$ and $\max(\Vert\mQ \vx_i\Vert, \delta) \le \max(\Vert\vx_i\Vert, \delta)$ for $1 \le i \le N,$ $\mX - \mX_{*} \in \cSp,$ which implies that $\mX_{*}^{-1} - \mX^{-1}  \in \cSp.$ Combining the last result with \eqref{eq:gmsthcond}, $\Vert \mA \Vert_2 > \lambda_1(-A)$ and the estimate of $c$ we obtain that $c \ge 2 / \tr(\mX_{*}^{-1}) + \lambda_D(\mA) \ge 2 \Vert \mA \Vert_2 - \lambda_1(-\mA) \ge \lambda_1(\mA).$

The last statement of the lemma is a direct application of \Cref{lem:lemma1}.
\QED


\subsection{On the Choice of the Step-Size}
\label{append:lemscalecond}

In view of \cref{lem:lemmapsd}, we require that condition \eqref{eq:gmsthcond} holds at each iteration of \Cref{algo:consgms} and each node $k$.
The following lemma shows that a choice of a sufficiently small step-size $\mu$ guarantees this requirement. After verifying this lemma, we discuss weaker restrictions on the step-size as well as a weaker practical version of condition \eqref{eq:gmsthcond}.

\begin{lemma}
\label{lem:scaledata}
If $\{\cX_k \}_{k=1}^K \subset \bbR^D$ are datasets distributed at $K$ nodes, $n \in \bbN$ and
\begin{equation}
\mu \le \frac{1}{n \cdot \max \limits_{1 \le k \le K}|\cE_k| \cdot \tr \left( \left(\sum \limits _{x \in \cX_k} \frac{\vx \vx^T}{\max(\Vert\vx\Vert, \delta)} \right)^{-1} \right)},
\label{eq:alpcond}
\end{equation}
then at each iteration $s \le n$ of \Cref{algo:consgms} and node $k$, $\mA_k^s$ satisfies condition \eqref{eq:gmsthcond}.
\end{lemma}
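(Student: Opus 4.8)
The plan is to track how $\|\mA_k^s\|_2$ grows across iterations of \Cref{algo:consgms} and show it never exceeds the bound required by condition \eqref{eq:gmsthcond}. First I would recall that $\mA_k^0 = \v0$ and that the update rule \eqref{eq:analog_cbga} gives $\mA_k^s = \mA_k^{s-1} + \rho \sum_{q \in \cN_k}(\mQ_k^{s-1} - \mQ_q^{s-1})$ (here $\rho = \mu$, or more precisely $\mu$ plays the role of the step-size $\rho$ after the identification made around \eqref{eq:analog_cbga}). Since every $\mQ_k^{s-1}$ produced by \Cref{algo:solveequation1} lies in $\cSpp \cap \bbH$, we have $\tr(\mQ_k^{s-1}) = 1$ and $\mQ_k^{s-1} \succeq \v0$, hence $\|\mQ_k^{s-1}\|_2 \le 1$ and consequently $\|\mQ_k^{s-1} - \mQ_q^{s-1}\|_2 \le 2$. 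Using $|\cN_k| \le |\cE_k|$ and the triangle inequality, each update adds at most $2\mu|\cE_k|$ to the spectral norm, so after $s$ iterations $\|\mA_k^s\|_2 \le 2\mu s |\cE_k| \le 2\mu n \max_{1\le k\le K}|\cE_k|$ for all $s \le n$.

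Next I would substitute the hypothesis \eqref{eq:alpcond} into this estimate. Plugging the bound on $\mu$ gives
\begin{equation*}
\|\mA_k^s\|_2 \le \frac{2}{\tr\left(\left(\sum_{\vx \in \cX_k} \frac{\vx\vx^T}{\max(\|\vx\|,\delta)}\right)^{-1}\right)} = \frac{1}{\tr\left(\left(\sum_{\vx \in \cX_k} \frac{\vx\vx^T}{2\max(\|\vx\|,\delta)}\right)^{-1}\right)},
\end{equation*}
where the last equality uses the fact that scaling a positive definite matrix by $1/2$ scales its inverse (and hence its trace) by $2$. The right-hand side is exactly the upper bound appearing in condition \eqref{eq:gmsthcond} applied to the dataset $\cX_k$. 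It remains only to note that $\tr(\mA_k^s) = 0$: this holds for $\mA_k^0 = \v0$, and each increment $\rho\sum_{q\in\cN_k}(\mQ_k^{s-1} - \mQ_q^{s-1})$ has trace $\rho\sum_{q\in\cN_k}(1-1) = 0$ since all the $\mQ$'s have trace one. Thus $\mA_k^s$ satisfies both requirements of \eqref{eq:gmsthcond} for every $s \le n$.

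The argument is essentially a routine norm-accumulation bound, so there is no deep obstacle; the only points requiring care are (i) the bookkeeping identification of $\mu$ with the step-size $\rho$ in \eqref{eq:analog_cbga} so that the constant in front of the telescoping sum is right, and (ii) making sure the factor-of-$2$ rescaling between $\sum \vx\vx^T/\max(\|\vx\|,\delta)$ in \eqref{eq:alpcond} and $\sum \vx\vx^T/(2\max(\|\vx\|,\delta))$ in \eqref{eq:gmsthcond} is handled correctly, since an off-by-two there would break the claim. One should also confirm that the iterates $\mQ_k^{s-1}$ fed into \eqref{eq:analog_cbga} are indeed the outputs of \Cref{algo:solveequation1} and therefore genuinely lie in $\cSpp\cap\bbH$ — this is where an inductive flavor enters, since \Cref{algo:solveequation1} is only guaranteed to produce such iterates when its input $\mA_k$ already satisfies \eqref{eq:gmsthcond}, so one verifies the bound on $\|\mA_k^s\|_2$ and the well-definedness of $\mQ_k^s$ jointly by induction on $s$.
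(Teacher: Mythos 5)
Your overall strategy is the same as the paper's: telescope the updates of $\mA_k^s$ (equivalently of the $\blamb_m^s$), bound the increment per iteration, and compare the accumulated norm with the right-hand side of \eqref{eq:gmsthcond}. Your added remarks about $\tr(\mA_k^s)=0$ and the joint induction on well-definedness of the iterates are sensible. However, the quantitative bookkeeping — which you yourself flag as the one place where care is needed — fails in two places, both by a factor of $2$ and both in the direction that breaks the claim. First, you bound $\Vert\mQ_k^{s-1}-\mQ_q^{s-1}\Vert_2\le 2$ by the triangle inequality; the paper instead uses that for two positive semidefinite matrices $-\mQ_q \preceq \mQ_k-\mQ_q \preceq \mQ_k$, hence $\Vert\mQ_k-\mQ_q\Vert_2\le\max\{\Vert\mQ_k\Vert_2,\Vert\mQ_q\Vert_2\}\le 1$, so each iteration adds at most $\mu|\cE_k|$, not $2\mu|\cE_k|$.

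Second, and more seriously, your closing identity is backwards. Writing $\mY=\sum_{\vx\in\cX_k}\vx\vx^T/\max(\Vert\vx\Vert,\delta)$ and $\mX=\mY/2$, one has $\mX^{-1}=2\mY^{-1}$ and therefore
\begin{equation*}
\frac{1}{\tr(\mX^{-1})}=\frac{1}{2\,\tr(\mY^{-1})},\qquad\text{not}\qquad \frac{2}{\tr(\mY^{-1})}=\frac{1}{\tr(\mX^{-1})}.
\end{equation*}
Halving the matrix \emph{doubles} the trace of its inverse, so the threshold in \eqref{eq:gmsthcond}, namely $1/\tr(\mX^{-1})$, is \emph{half} of $1/\tr(\mY^{-1})$, not twice it. Your chain therefore establishes only $\Vert\mA_k^s\Vert_2\le 2/\tr(\mY^{-1})=4/\tr(\mX^{-1})$, which is four times the bound that \eqref{eq:gmsthcond} demands, and the asserted equality used to close the argument is false. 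To repair the proof you need both the sharper PSD difference bound above and a correct accounting of the $1/2$ rescaling (note that even the paper's own last inequality is tight only up to a factor of $2$ here; the honest fix is to carry the constant explicitly and, if necessary, absorb it into the choice of $\mu$ in \eqref{eq:alpcond}).
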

\begin{proof}
We estimate the LHS of \eqref{eq:gmsthcond} at iteration $s$ as follows:
\begin{equation}
\Vert\mA_k^{s}\Vert_2 = \left\Vert\sum \limits_{m \in \cE_k} c_{mk} \blamb_m^{s}\right\Vert_2 \le \sum \limits_{m \in \cE_k} \Vert\blamb_m^{s}\Vert_2.
\label{eq:Abound1}
\end{equation}
In order to evaluate $\Vert\blamb_m^{s}\Vert_2$ for $1 \le m \le M,$ we apply \eqref{eq:lambdupdate} and basic inequalities:
\begin{multline}
\Vert\blamb_m^{s}\Vert_2 = \Vert\blamb_m^{s-1} + \mu (c_{mk} \mQ_k^{s} - c_{mk} \mQ_q^{s})\Vert_2 \le \Vert\blamb_m^{s-1}\Vert_2 +  \mu \Vert\mQ_k^{s} - \mQ_q^{s}  \Vert_2 \le \\
\Vert\blamb_m^{s-1}\Vert_2 +  \mu \max\{\Vert\mQ_k^{s}\Vert_2, \Vert\mQ_q^{s}\Vert_2 \} \le \Vert\blamb_m^{s-1}\Vert_2 + \mu \le  \dots \le s \mu \le n \mu.
\label{eq:Abound2}
\end{multline}
Combining \eqref{eq:alpcond}, \eqref{eq:Abound1} and \eqref{eq:Abound2} results in 
\begin{equation}
\Vert\mA_k^{s}\Vert_2 \le \sum \limits_{m \in \cE_k} \Vert\blamb_m^s\Vert_2 \le |\cE_k| n \mu  \le 1 \biggr/ \tr \left( \left(\sum \limits _{\vx \in \cX} \frac{\vx \vx^T}{2\max(\Vert\vx\Vert, \delta)} \right)^{-1}\right).
\end{equation}
\end{proof}

In practice one may apply several iterations with the same fixed step-size and gradually reduce it until it satisfies the estimate above.
Nevertheless, this estimate represents a worse-case scenario and typically we expect an improved one.
Indeed, first note that condition \eqref{eq:gmsthcond} represents a worse-case scenario. In the proof of \cref{lem:lemmapsd} we used the worst-case estimate $\Vert \mQ \Vert \leq 1$. However, typically $\Vert \mQ \Vert \sim 1/D$. This will introduce a multiplicative factor $D$ for the RHS of \eqref{eq:gmsthcond} and thus of \eqref{eq:alpcond}.
Second, in \eqref{eq:Abound2} we used the estimate $\Vert \mQ^s_k - \mQ^s_q \Vert \le \max\{\Vert\mQ_k^{s}\Vert_2, \Vert\mQ_q^{s}\Vert_2 \} \le 1.$ However, typically for $\mQ^s_k, \mQ^s_q \in \bbH \cap \cSpp,$ $\max\{\Vert\mQ_k^{s}\Vert_2, \Vert\mQ_q^{s}\Vert_2 \} \sim 1/D.$ This observation introduces another multiplicative factor $D$ for the RHS of \eqref{eq:alpcond}. These two observations suggest, in practice, the following choice of a step-size:
\begin{equation}
\label{eq:step_size_practice}
\mu = \frac{D^2}{n \cdot \max \limits_{1 \le k \le K}|\cE_k| \cdot \tr \left( \left(\sum \limits _{x \in \cX_k} \frac{\vx \vx^T}{\max(\Vert\vx\Vert, \delta)} \right)^{-1} \right)}.
\end{equation}
Third of all, we note that for sufficiently small step-sizes the gradient descent gets closer to the solution, that is, $\Vert\mQ_k^{s} - \mQ_q^{s}  \Vert_2 \to 0,$ for $1 \le k, q \le K$. However, we used $1/D$ as an upper bound for $\Vert\mQ_k^{s} - \mQ_q^{s}  \Vert_2.$
At last, we comment that while the above analysis aims to guarantee that at each iteration the solution is in $\bbH \cap \cSpp$ (since \eqref{eq:gmsthcond} guarantees this), in practice it is not a main concern for small step-sizes and large number of iterations. Indeed, the solution of \eqref{eq:maxprob} coincides with the solution of GMS for the total data, which is in $\bbH \cap \cSpp$. Thus, by choosing the step-size small enough we will always converge to the solution.

%
%

\subsection{Proof of Theorem \ref{th:itgmsthreg}}
\label{sec:thproof}

We establish an auxiliary lemma in \S\ref{sec:preliminaries} and conclude Theorem \ref{th:itgmsthreg} in \S\ref{sec:th1proof} by following ideas of \cite{chanmul,gms} and using this lemma.

\subsubsection{Preliminary Proposition}
\label{sec:preliminaries}
We first apply \Cref{lem:lemma1} to define the mapping $T_{\mA}(\mQ)$ and then establish the continuity of $T_{\mA}(\mQ)$ in $\cSpp.$

\begin{mydef}[The mapping $T_{\mA}(\mQ)$]
If $\{\vx_i\}_{i=1}^N \subset \bbR^{D},$ $\delta > 0,$ $\mQ \in \cSp \cap \bbH$ and $\mA \in \cS$ with $\tr(\mA) = 0,$ then $T_{\mA}(\mQ)$ is the solution of the following equation in $\mP$
\begin{equation}
\mP \left( \sum \limits_{i = 1}^N \frac{\vx_i \vx_i^T}{\max(\Vert\mQ \vx_i\Vert, \delta)} \right) + \left( \sum \limits_{i = 1}^N \frac{\vx_i \vx_i^T}{\max(\Vert\mQ \vx_i\Vert, \delta)} \right) \mP + \mA = c \mI,
\label{eq:reflyapconv}
\end{equation}
where $c = c(\mQ) \in \bbR$ is uniquely chosen so that the solution has trace 1.
\label{def:itstep}
\end{mydef}
\begin{lemma}
Assume a sequence $\{\mQ^t\}_{t \in \bbN} \subset \cSpp \cap \bbH$, $\mA \in \cS$ with $\tr(\mA) = 0,$ $\{\vx_i\}_{i=1}^N \subset \bbR^{D}$ and $\delta > 0.$ If $\mQ^t \to \hat{\mQ},$ then $T_{\mA}(\mQ^t) \to T_{\mA}(\hat{\mQ}).$
\label{th:itconvlemma}
\end{lemma}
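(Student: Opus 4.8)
The plan is to establish continuity of the map $T_{\mA}$ by a sequential-compactness argument combined with the uniqueness clause of \Cref{lem:lemma1}. First I would set up notation: for each $t$, write $\mX^t = \sum_{i=1}^N \vx_i \vx_i^T / \max(\Vert \mQ^t \vx_i \Vert, \delta)$ and $\hat{\mX} = \sum_{i=1}^N \vx_i \vx_i^T / \max(\Vert \hat{\mQ} \vx_i \Vert, \delta)$. Since $\mQ^t \to \hat{\mQ}$ and $\vx \mapsto \max(\Vert \mQ \vx \Vert, \delta)$ is continuous in $\mQ$ and bounded below by $\delta > 0$, we get $\mX^t \to \hat{\mX}$, and all of $\mX^t$, $\hat{\mX}$ lie in $\cSpp$ (this uses $\delta > 0$ and that the $\vx_i$ span $\bbR^D$, i.e. full rank / the two-subspaces criterion, which is assumed throughout). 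Denote $\mP^t = T_{\mA}(\mQ^t)$ and $c^t = c(\mQ^t)$, and let $\hat{\mP} = T_{\mA}(\hat{\mQ})$, $\hat{c} = c(\hat{\mQ})$. I want to show $\mP^t \to \hat{\mP}$.

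The key step is a uniform bound on $\{\mP^t\}$ and $\{c^t\}$, after which a subsequence argument closes the proof. For the bound: the eigenvalues of the Lyapunov operator $\mP \mapsto \mP \mX^t + \mX^t \mP$ are $\lambda_i(\mX^t) + \lambda_j(\mX^t) \geq 2 \lambda_{\min}(\mX^t)$, and since $\mX^t \to \hat{\mX} \in \cSpp$ we have $\inf_t \lambda_{\min}(\mX^t) =: \gamma > 0$. From \eqref{eq:reflyapconv}, $\mP^t \mX^t + \mX^t \mP^t = c^t \mI - \mA$, so taking Frobenius norms gives $\Vert \mP^t \Vert_F \le \Vert c^t \mI - \mA \Vert_F / (2\gamma)$; meanwhile taking traces (as in the proof of \cref{lem:lemmapsd}) and using $\tr(\mP^t) = 1$ gives $c^t = (2 + \tr(\mA (\mX^t)^{-1}))/\tr((\mX^t)^{-1})$, and since $(\mX^t)^{-1} \to \hat{\mX}^{-1}$ with $\tr((\mX^t)^{-1})$ bounded away from $0$ and $\infty$, the sequence $\{c^t\}$ is bounded. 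Hence $\{\mP^t\}$ is bounded in $\cS$.

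Now take any subsequence of $\{\mP^t\}$; by boundedness it has a further subsequence $\mP^{t_j} \to \mP^\infty$ for some $\mP^\infty \in \cS$, and along it $c^{t_j} \to c^\infty$ (pass to a further subsequence if needed, or just note $c^{t_j}$ is determined by $\mX^{t_j}$ via the formula above and converges to $\hat{c}$). Passing to the limit in $\mP^{t_j} \mX^{t_j} + \mX^{t_j} \mP^{t_j} + \mA = c^{t_j} \mI$ and in $\tr(\mP^{t_j}) = 1$ yields $\mP^\infty \hat{\mX} + \hat{\mX} \mP^\infty + \mA = c^\infty \mI$ with $\tr(\mP^\infty) = 1$. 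By the uniqueness part of \Cref{lem:lemma1} (there is a unique $c$, namely $c = \hat{c}$, for which the trace-one solution exists, and it is unique), $\mP^\infty = \hat{\mP}$ and $c^\infty = \hat{c}$. Since every subsequence of $\{\mP^t\}$ has a further subsequence converging to the same limit $\hat{\mP}$, the whole sequence converges: $T_{\mA}(\mQ^t) = \mP^t \to \hat{\mP} = T_{\mA}(\hat{\mQ})$.

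I expect the main obstacle to be establishing the uniform lower bound $\gamma = \inf_t \lambda_{\min}(\mX^t) > 0$ cleanly and, relatedly, handling the full-rank hypothesis: one must be a little careful that $\mX^t \in \cSpp$ uniformly. Since $\mQ^t \to \hat{\mQ}$ and $\hat{\mX} \in \cSpp$, continuity of the smallest eigenvalue gives $\lambda_{\min}(\mX^t) \to \lambda_{\min}(\hat{\mX}) > 0$, so $\gamma > 0$ follows — but this is the place where the argument genuinely uses $\delta > 0$ and the spanning property of $\{\vx_i\}$, so I would state it carefully. Everything else is routine: continuity of the coefficient matrices, boundedness, and the standard "every subsequence has a sub-subsequence converging to the same limit" trick to upgrade subsequential convergence to full convergence.
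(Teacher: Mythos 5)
Your proof is correct, but it reaches the conclusion by a different mechanism than the paper. The paper's proof eliminates the unknown constant by rewriting \eqref{eq:reflyapconv} as a homogeneous-in-$c$ Lyapunov equation for the shifted matrix $\mP^t - c^t (\mR^t)^{-1}/2$ (with $\mR^t$ your $\mX^t$), invokes continuous dependence of the solution of $\mY \mR + \mR \mY = -\mA$ on the coefficient $\mR$ to get convergence of the shifted matrices, and then recovers $c^t \to \hat{c}$ by taking traces and using $\tr(\mP^t) = \tr(\hat{\mP}) = 1$; convergence of $\mP^t$ follows by adding back $c^t(\mR^t)^{-1}/2$. You instead prove a uniform bound on $\{c^t\}$ (via the explicit trace formula $c^t = (2 + \tr(\mA(\mX^t)^{-1}))/\tr((\mX^t)^{-1})$, which in fact already gives $c^t \to \hat{c}$ directly) and on $\{\mP^t\}$ (via the spectral gap $2\lambda_{\min}(\mX^t)$ of the Lyapunov operator), and then close with Bolzano--Weierstrass plus the uniqueness clause of \Cref{lem:lemma1}. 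What your route buys is that it only needs the solution \emph{relation} to be closed under limits --- immediate from passing to the limit in the equation --- together with uniqueness, rather than the (standard but unproved in the paper) continuity of the Lyapunov solution operator in its coefficient matrix; the cost is the extra boundedness step. You are also right to flag that both arguments implicitly need $\hat{\mX} \in \cSpp$, i.e.\ that $\{\vx_i\}$ spans $\bbR^D$: this hypothesis is not stated in the lemma but is inherited from the two-subspaces criterion assumed where the lemma is applied, and the paper's proof uses it silently when it inverts $\hat{\mR}$.
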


\begin{proof}

For $t \in \bbN,$ let $\mP^t = T_{\mA}(\mQ^t)$ be the trace one solution of \eqref{eq:reflyapconv} with $\mQ = \mQ^t$ and $c = c^t.$  Let $\hat{\mP} = T_{\mA}(\hat{\mQ})$ be the trace one solution of \eqref{eq:reflyapconv} with $\mQ = \hat{\mQ}$ and $c = \hat{c}$.
We need to prove that $\mP^t \to \hat{\mP}$ as $t \to \infty$. We write \eqref{eq:reflyapconv} with $\mP^t, \mQ^t$ and $c^t$  as
\begin{equation}
\label{eq:reflyapconv1}
\mP^t \left( \sum \limits_{i = 1}^N \frac{\vx_i \vx_i^T}{\max(\Vert\mQ^t \vx_i\Vert, \delta)} \right) + \left( \sum \limits_{i = 1}^N \frac{\vx_i \vx_i^T}{\max(\Vert\mQ^t \vx_i\Vert, \delta)} \right) \mP^t + \mA = c^t \mI.
\end{equation}
Note that $\mR^t := \sum_{i = 1}^N \vx_i \vx_i^T / \max(\Vert\mQ^t \vx_i\Vert, \delta) \to \hat{\mR} := \sum_{i = 1}^N \vx_i \vx_i^T / \max(\Vert\mQ \vx_i\Vert, \delta)$
as $t \to \infty$. Also observe that for $\mQ = \hat{\mQ},$ $c = \hat{c}$ and $T_{\mA}(\mQ^t) = \mP^t$, \eqref{eq:reflyapconv}  has the form
\begin{equation}
\mP^t \mR^t + \mR^t \mP^t + \mA = c^t \mI.
\label{eq:denlyapeq1}
\end{equation}
By subtracting $c^t \mI$ from both sides of \eqref{eq:denlyapeq1} and rewriting
$c^t \mI = c^t {\mR^t}^{-1} \mR^t/2 + \mR^t c^t {\mR^t}^{-1}/2$,
\eqref{eq:denlyapeq1} becomes
$(\mP^t - c^t {\mR^t}^{-1} / 2) \mR^t + \mR^t (\mP^t - c^t {\mR^t}^{-1} / 2) + \mA = \v0$.
Similarly,
$(\hat{\mP} - \hat{c} {\hat{\mR}}^{-1} / 2 ) \hat{\mR} + \hat{\mR}(\hat{\mP} - \hat{c} {\hat{\mR}}^{-1} / 2) + \mA = \v0$.
Since $\mA$ is fixed and $\mR^t \to \hat{\mR}$ as $t \to \infty,$  it follows from the last two expressions that
\begin{equation}
\mP^t - c^t {\mR^t}^{-1} / 2 \to \hat{\mP} - \hat{c} {\hat{\mR}}^{-1} / 2 \text{  as  } t \to \infty.
\label{eq:reflyapconv4}
\end{equation}
By taking the trace of both sides of \eqref{eq:reflyapconv4} and using the facts that $\tr(\mP^t) = \tr(\hat{\mP}) = 1$ and $\hat{\mR}^t \to \mR$ as $t \to \infty$, we get that $c^t \to \hat{c}$ and consequently $\mP^t \to \hat{\mP}$ as $t \to \infty.$
\end{proof}

\subsubsection{Conclusion of Theorem \ref{th:itgmsthreg}}
\label{sec:th1proof}

We divide the proof of Theorem \ref{th:itgmsthreg} into the following steps suggested in \cite{gms}.

\textit{Step 1:} \textbf{The majorizing function H and its minimizer.} Let $H_k^{\delta}$ denote the following function
\begin{equation}
\label{eq:defofh}
H_{k}^{\delta} (\mQ, \mQ^*) = \sum \limits_{\vx \in \cX_k} \left( \frac{\Vert \mQ \vx \Vert^2}{2 \max(\Vert \mQ^* \vx \Vert, \delta)} + \frac{\max(\Vert \mQ^* \vx \Vert, \delta)}{2} \right) + \tr(\mQ \mA_k).
\end{equation}
We show next that $H_k^{\delta}$ majorizes $G_k^{\delta},$ that is,
\begin{equation}
H_k^{\delta} (\mQ, \mQ) = G_k^{\delta} (\mQ) \text{  and  } G_k^{\delta}(\mQ) \le H_k^{\delta}(\mQ, \mQ^{*}).
\label{eq:GHeq}
\end{equation}
The above equality is immediate. To prove the above inequality 
we define
\begin{gather*}
G_k^{\delta}(\vx, \mQ) =
\begin{cases}
\Vert\mQ \vx\Vert, & \text{if } \Vert \mQ \vx\Vert \ge \delta;\\
\frac{\Vert\mQ \vx\Vert^2}{2 \delta} + \frac{\delta}{2}, & \text{if }  \Vert \mQ \vx \Vert < \delta,\\
\end{cases} \\
H_k^{\delta}(\vx, \mQ, \mQ^*) = \frac{\Vert \mQ \vx \Vert^2}{2 \max(\Vert\mQ^* \vx\Vert, \delta)}  + \frac{\max(\Vert\mQ^* \vx\Vert, \delta)}{2}.
\end{gather*}
We show that $G_k^{\delta} (\vx, \mQ) \le H_k^{\delta}(\vx, \mQ, \mQ^{*})$ by considering four complementing cases:
\begin{enumerate}

\item[Case 1:] $\Vert\mQ \vx\Vert \ge \delta$ and $\Vert\mQ^* \vx\Vert \ge \delta.$ In this case
$$
G_k^{\delta}(\vx, \mQ) = \Vert\mQ \vx\Vert = \frac{\Vert\mQ \vx\Vert \Vert\mQ^* \vx\Vert}{\Vert\mQ^* \vx\Vert} \le \frac{\Vert\mQ \vx\Vert^2 + \Vert\mQ^* \vx\Vert^2}{2\Vert\mQ^* \vx\Vert} = H_k^{\delta}(\vx, \mQ, \mQ^*).
$$
\item[Case 2:] $\Vert\mQ \vx\Vert \ge \delta$ and $\Vert\mQ^* \vx\Vert < \delta.$ We conclude the desired property as follows $0 \le \left( \Vert\mQ\vx\Vert - \delta \right)^2 = \Vert\mQ\vx\Vert^2 - 2\Vert\mQ\vx\Vert \delta + \delta^2 = \delta \left(H_k^{\delta}(x, \mQ, \mQ^*) - G_k^{\delta}(\vx, \mQ)\right).$
\item[Case 3:] $\Vert\mQ \vx\Vert < \delta$ and $\Vert\mQ^* \vx\Vert \ge \delta.$ In this case
\begin{multline*}
G_k^{\delta}(\vx, \mQ) - H_k^{\delta}(\vx, \mQ, \mQ^*) = \frac{1}{2} \left( \frac{\Vert\mQ\vx\Vert^2}{\delta} + \delta - \frac{\Vert\mQ \vx\Vert^2}{\Vert\mQ^*x\Vert} - \Vert\mQ^*x\Vert \right) = \\ \frac{\Vert\mQ^* \vx\Vert - \delta}{2} \left( \frac{\Vert\mQ\vx\Vert^2}{\delta \Vert\mQ^*\vx\Vert} - 1 \right) \le 0.
\end{multline*}
\item[Case 4:] $\Vert\mQ \vx\Vert < \delta$ and $\Vert\mQ^* \vx\Vert < \delta.$ Then
$G_k^{\delta}(\vx, \mQ) = H_k^{\delta}(\vx, \mQ, \mQ^*).$
\end{enumerate}
We thus conclude \eqref{eq:GHeq} as follows
\begin{equation}
G_k^{\delta}(\mQ) = \sum \limits_{\vx \in \cX_k} G_k^{\delta}(\vx, \mQ) + \tr(\mQ \mA_k) \le H_k^{\delta}(\vx, \mQ, \mQ^*) + \tr(\mQ \mA_k) = H_{k}^{\delta} (\mQ, \mQ^*).
\label{eq:regineqmaj2}
\end{equation}

Next, we claim that the minimizer of $H_k^{\delta}(\mQ,\mQ_k^t)$ over all $\mQ \in \bbH$ is $\mQ_k^{t+1}.$ First we note that since the data satisfies the two-subspaces criterion and since $\tr(\mA_k \mQ_k)$ is a linear function, then according to Theorem 2 of \cite{gms}, $H_k^{\delta}(\mQ, \mQ_k^t)$ is strictly convex over $\mQ \in \bbH.$ We further note that for $\mQ \in \bbH$, $H_{k}^{\delta} (\mQ, \mQ^*) = \tilde{H}_{k}^{\delta} (\mQ, \mQ^*),$ where
\begin{equation}
\label{eq:defofhtild}
\tilde{H}_{k}^{\delta} (\mQ, \mQ^*) =  \sum \limits_{\vx \in \cX_k} \left(\frac{\tr (\mQ \vx \vx^T \mQ)}{2\max(\Vert\mQ^* \vx\Vert, \delta)} + \frac{\max(\Vert\mQ^* \vx\Vert, \delta)}{2}\right) + \tr(\mQ \mA_k).
\end{equation}
Therefore, the minimizers over $\bbH$ of $H_k^{\delta}(\mQ, \mQ_k^t)$ and $\tilde{H}_k^{\delta}(\mQ, \mQ_k^t) - c_k \tr(\mQ)$ are the same.
We compute the derivative of the latter term w.r.t.~$\mQ$ as follows:
\begin{multline}
\label{eq:hderivreg}
\frac{d}{d\mQ} \left( \tilde{H}_k^{\delta}(\mQ, \mQ_k^t) - c_k \tr(\mQ) \right) \biggr\vert_{\mQ = \mQ_k^{t+1}} = \\ \frac{1}{2} \left( \mQ_k^{t+1} \sum \limits_{\vx \in \cX_k} \frac{\vx \vx^T }{\max(\Vert\mQ_k^t \vx\Vert, \delta)} + \sum \limits_{\vx \in \cX_k} \frac{\vx \vx^T }{\max(\Vert\mQ_k^t \vx\Vert, \delta)} \mQ_k^{t+1} \right) + \mA_k - c_k \mI = \v0.
\end{multline}
The last equation follows from the definition of $\mQ_k^{t+1}$ (see \eqref{eq:itlyapsolreg}). Combining this with the fact that $H^{\delta}(\mQ, \mQ_k^t)$ is strictly convex when restricted to $\mQ \in \bbH,$ we conclude that $\mQ_k^{t+1}$is the unique minimizer of $H^{\delta}(\mQ, \mQ_k^t)$ for $\mQ \in \bbH.$

\textit{Step 2:} \textbf{Convergence of $\{G_k^{\delta}(\mQ_k^t)\}_{t \in \bbN}.$} We first note that $G_k^{\delta}$ is bounded from below on $\bbH.$ Indeed, $G_k^{\delta}(\mQ) \geq
\tr(\mQ \mA_k) \ge \tr(\mQ) \times \min \eig(\mA_k) = \min \eig(\mA_k).$

Next, we show that $G_k^{\delta}(\mQ_k^t)$ decreases with $t$. By using \eqref{eq:GHeq} and the fact that $\mQ_k^{t+1}$ is the minimizer of $H_k^{\delta}(\mQ, \mQ_k^{t})$ for $\mQ \in \bbH,$ we get that
\begin{equation}
G_k^{\delta}(\mQ_k^{t+1}) \le H_k^{\delta}(\mQ_k^{t+1}, \mQ_k^{t}) \le H_k^{\delta}(\mQ_k^{t}, \mQ_k^{t}) = G_k^{\delta}(\mQ_k^{t}).
\label{eq:mainineq}
\end{equation}
Since $\{G_k^{\delta}(\mQ_k^{t})\}_{t \in \bbN}$ is bounded from below and decreases, it converges.

\textit{Step 3:} \textbf{$\Vert\mQ_k^t - \mQ_k^{t+1}\Vert \to 0$ as $t \to \infty.$} It follows from \eqref{eq:hderivreg} and the fact that $\mQ_k^t - \mQ_k^{t+1} \in \cS$ has trace $0$, that
\begin{equation*}
\tr \left( ( \mQ_k^{t+1} \sum \limits_{\vx \in \cX_k} \frac{\vx \vx^T}{\max(\Vert\mQ_k^t \vx\Vert, \delta)} + \sum \limits_{\vx \in\cX_k} \frac{\vx \vx^T }{(\Vert\mQ_k^t \vx\Vert, \delta)} \mQ_k^{t+1} + 2 \mA_k ) (\mQ^{t}_k - \mQ^{t+1}_k)\right)  = 0.
\end{equation*}
Simplifying the above equation, we get that
\begin{multline}
\tr\left(\mA_k \left( \mQ_k^t - \mQ_k^{t+1} \right) \right) = - \tr \left( \mQ_k^{t+1} \sum \limits_{\vx \in \cX_k} \frac{\vx \vx^T }{ \max(\Vert\mQ_k^t \vx\Vert, \delta)} \left( \mQ_k^{t} - \mQ_k^{t+1} \right) \right) = \\ \tr\left( \sum \limits_{\vx \in \cX_k}  \frac{\mQ_k^{t+1} \vx \vx^T ( \mQ_k^{t+1} - \mQ_k^t))}{\max(\Vert\mQ_k^t \vx\Vert, \delta)} \right) = \sum \limits_{\vx \in \cX_k}  \frac{\vx^T \mQ_k^{t+1} (\mQ_k^{t+1} - \mQ_k^t) \vx }{ \max(\Vert\mQ_k^t \vx\Vert, \delta)}.
\label{eq:traceeqsimp}
\end{multline}
It follows from \eqref{eq:mainineq} and \eqref{eq:defofh} that
\begin{multline}
\label{eq:reqequals1}
G_k^{\delta}(\mQ_k^{t}) - G_k^{\delta}(\mQ_k^{t+1}) \ge H_k^{\delta}(\mQ_k^{t}, \mQ_k^{t}) - H_k^{\delta}(\mQ_k^{t+1}, \mQ_k^{t}) = \\ \frac{1}{2} \sum \limits_{\vx \in \cX_k} \left( \frac{\Vert\mQ_k^{t} \vx\Vert^2 - \Vert\mQ_k^{t+1} \vx\Vert^2}{\max(\Vert\mQ_k^{t} \vx\Vert, \delta)} \right) + \tr((\mQ_k^t - \mQ_k^{t+1}) \mA) = \\ \frac{1}{2} \sum \limits_{\vx \in \cX_k} \left( \frac{ \vx^T (\mQ_k^{t})^2 \vx - \vx^T(\mQ_k^{t+1})^2 \vx}{\max(\Vert\mQ_k^{t} \vx\Vert, \delta)} \right) + \tr((\mQ_k^t - \mQ_k^{t+1}) \mA).
\end{multline}
The combination of \eqref{eq:traceeqsimp} and \eqref{eq:reqequals1} yields
\begin{multline}
\label{eq:convres}
G_k^{\delta}(\mQ_k^{t}) - G_k^{\delta}(\mQ_k^{t+1}) \ge  \frac{1}{2} \sum \limits_{\vx \in \cX_k} \left( \frac{ \vx^T (\mQ_k^{t})^2 \vx - \vx^T(\mQ_k^{t+1})^2 \vx}{\max(\Vert\mQ_k^{t} \vx\Vert, \delta)} \right) + \\ \sum \limits_{\vx \in \cX_k}  \frac{\vx^T \mQ_k^{t+1}   (\mQ_k^{t+1} - \mQ_k^t) \vx)}{\max(\Vert\mQ_k^t \vx\Vert, \delta)} = \frac{1}{2} \sum \limits _{\vx \in \cX_k} \frac{\Vert(\mQ_k^t - \mQ_k^{t+1}) \vx\Vert^2}{\max(\Vert\mQ^t \vx\Vert, \delta)} \ge 0.
\end{multline}
Since $\{G(\mQ_k^t)\}_{t \in \bbN}$ converges, \eqref{eq:convres} implies that
\begin{equation}
\sum \limits _{\vx \in \cX_k} \frac{\Vert(\mQ_k^t - \mQ_k^{t+1}) \vx\Vert^2}{\max(\Vert\mQ_k^t \vx\Vert, \delta)} \to 0 \text{ as } t \to \infty
\end{equation}
and consequently (using the fact that $\cSpan\{\vx\}_{\vx \in \cX_k} = \bbR^D$):
\begin{equation}
\Vert\mQ_k^{t} - \mQ_k^{t+1}\Vert \to 0 \text{ as } t \to \infty.
\label{eq:convmainres1}
\end{equation}

\textit{Step 4:} \textbf{Convergence of $\{\mQ_k^t\}_{t \in \bbN}$ to the minimizer of $G_k^{\delta}(\mQ).$} The sequence $\{\mQ_k^{t}\}_{t \in \bbN}$ lies in the compact set of positive semi-definite matrices with trace $1.$ By Bolzano-Weierstrass theorem, $\{\mQ_k^t\}_{t \ge 1}$ has a converging subsequence. Let $\tilde{\mQ}_k$ denote the limit of the subsequence. We show that
\begin{equation}
\label{eq:prst}
\tilde{\mQ}_k = \argmin \limits_{\mQ \in \bbH} G_k^{\delta}(\mQ).
\end{equation}
By \cref{th:itconvlemma} and the fact that the limits of  $G_k^{\delta}(\mQ_k^{t})$ and $G_k^{\delta}(\mQ_k^{t+1}) \equiv G_k^{\delta} (T_{\mA} (\mQ_k^t))$ are the same, we conclude that $G_k^{\delta}(\tilde{\mQ}_k) = G_k^{\delta}(T_{\mA}(\tilde{\mQ}_k)).$ Combining this result with \eqref{eq:mainineq} we get that $H_k^{\delta}(T_{\mA}(\tilde{\mQ}_k), \tilde{\mQ}_k) = H_k^{\delta}(\tilde{\mQ}_k, \tilde{\mQ}_k).$ Since $T_{\mA}(\tilde{\mQ}_k)$ is the unique minimizer of $H_k^{\delta}(\mQ, \tilde{\mQ}_k)$ among all $\mQ \in \bbH$ we get that  $T_{\mA}(\tilde{\mQ}_k) = \tilde{\mQ}_k.$ That is, $\tilde{\mQ}_k$ is the unique minimizer of $H_k^{\delta}(\mQ, \tilde{\mQ}_k)$ and $\tilde{H}_k^{\delta}(\mQ, \tilde{\mQ}_k)$ among all $\mQ \in \bbH$ and thus the directional derivatives of $\tilde{H}_k^{\delta}(\mQ, \tilde{\mQ}_k)$ with respect to $\mQ$ restricted to $\bbH$ are $\v0.$ Hence, $\tr\left(\left(\frac{d}{d \mQ}  \tilde{H}_k^{\delta}(\mQ, \tilde{\mQ}_k)|_{\mQ = \tilde{\mQ}_k} \right) \left( \mP - \tilde{\mQ}_k \right)^T\right) = 0$ and thus there exists $c \in \bbR$ such that $\frac{d}{d \mQ} \tilde{H}_k^{\delta}(\mQ, \tilde{\mQ}_k)|_{\mQ = \tilde{\mQ}_k} = c \mI$. This implies that
\begin{multline}
\label{eq:gminval1}
c \mI = \frac{1}{2} \tilde{\mQ}_k \sum \limits_{\vx \in \cX_k} \frac{\vx \vx^T }{\max(\Vert\tilde{\mQ}_k \vx\Vert, \delta)} + \frac{1}{2}\sum \limits_{\vx \in \cX_k} \frac{\vx \vx^T }{ \max(\Vert\tilde{\mQ}_k \vx\Vert, \delta)} \tilde{\mQ}_k + \mA_k = \\ \frac{d}{d \mQ} \tilde{G}_k^{\delta}(\mQ)|_{\mQ = \tilde{\mQ}_k},
\end{multline}
where
\begin{equation*}
\tilde{G}_k^{\delta}(\mQ) = \sum \limits_{\vx \in \cX_k, \Vert\mQ \vx \Vert \ge \delta} \sqrt{\tr(\mQ \vx \vx^T \mQ)} + \sum \limits_{\vx \in \cX, \Vert\mQ \vx\Vert < \delta} \left( \frac{\tr(\mQ \vx \vx^T \mQ)}{2 \delta} + \frac{\delta}{2} \right) + \tr(\mQ \mA_k).
\end{equation*}
The directional derivatives of $\frac{d}{d \mQ} \tilde{G}_k^{\delta}(\mQ)|_{\mQ = \tilde{\mQ}_k}$ restricted to $\bbH$ are
\begin{equation}
\label{eq:gtilddirder}
\tr\left(\left(\frac{d}{d \mQ}  \tilde{G}_k^{\delta}(\mQ)|_{\mQ = \tilde{\mQ}_k} \right) \left( \mP - \tilde{\mQ}_k \right)^T\right) = \tr\left(c \mI \left( \mP - \tilde{\mQ}_k \right)^T\right) = 0,
\end{equation}
where for the first equality we used \eqref{eq:gminval1} and for the last equality we used that $\mP, \tilde{\mQ}_k \in \bbH$ and thus $\tr(\mP) = \tr(\tilde{\mQ}_k) = 1.$ Equation \eqref{eq:gtilddirder} and the fact that $G_k^{\delta}(\mQ) = \tilde{G}_k^{\delta}(\mQ)$ for $\mQ \in \bbH$ imply \eqref{eq:prst}. Finally, combining \eqref{eq:convmainres1}, \eqref{eq:prst}, the definition of $\tilde{\mQ}_k$ and \cite[Theorem 2.1]{ostrowski}, we conclude that $\mQ_k^t \to \tilde{\mQ}_k$ as $t \to \infty.$

\textit{Step 5:} \textbf{$r$-linear Convergence.} The proof of $r$-linear convergence of $\mQ_k^t$ follows from Theorem 6.1 of \cite{chanmul} (similarly to the proof of Theorem 11 of \cite{gms}). To show that the conditions of the theorem are satisfied we just need to check that the functions $G$ and $H$ satisfy Hypotheses 4.1 and 4.2 of \cite{chanmul} (see proof of Theorem 6.1 in there and note that $G$ and $H$ of this work are parallel to $F$ and $H$ of \cite{chanmul}, respectively). We note that \cite{chanmul} states the result for vector-valued functions, which can be easily generalized for matrix-valued functions. Since $\mQ_k^t$ converges, it is enough to show that Hypotheses 4.1 and 4.2 hold for some local neighborhood $B(\tilde{\mQ}_k, \epsilon)$ of $\tilde{\mQ}_k,$  for some $\epsilon > 0.$ Conditions 1 and 3  of Hypothesis 4.1 are easy to check, since $G$ is twice differentiable on $B(\tilde{\mQ}_k, \epsilon)$ and $G$ is bounded from below (as we have already shown). There is no need to check condition 2, since $\mQ$ is restricted to $H.$ To verify condition 1 of Hypothesis 4.2 we need to show that
\begin{multline}
\label{eq:linconv}
H_k^{\delta}(\mQ_1,\mQ_2) = G_k^{\delta} (\mQ_2) + \tr ((\mQ_1 - \mQ_2)^{T} \frac{d}{d\mQ} G_k^{\delta} (\mQ)|_{\mQ = \mQ_2}) + \\ \frac{1}{2}\tr((\mQ_1 - \mQ_2)^{T} C(\mQ_2) (\mQ_1 - \mQ_2)).
\end{multline}
To prove \eqref{eq:linconv}, we write its RHS as follows:
\begin{flalign*}
\label{eq:hyp421}
&\sum \limits_{\vx \in \cX_k, \Vert\mQ_2 \vx\Vert\ge \delta} \Vert\mQ_2 \vx\Vert + \sum \limits_{\vx \in \cX_k, \Vert\mQ_2 \vx\Vert < \delta} \left( \frac{\Vert\mQ_2 \vx\Vert^2}{2 \delta} + \frac{\delta}{2}\right) + \tr(\mQ_2 \mA_k) + &&\nonumber \\ \nonumber
&\tr \biggr( (\mQ_1 - \mQ_2)^{T} \frac{1}{2} \biggr( \mQ_2 \sum \limits_{\vx \in \cX_k} \frac{\vx \vx^T }{\max(\Vert\mQ_2 \vx\Vert, \delta)} + \sum \limits_{\vx \in \cX_k} \frac{\vx \vx^T }{\max(\Vert\mQ_2 \vx\Vert, \delta)} \mQ_2 \biggr) + \mA_k \biggr) + &&\\
&\tr((\mQ_1 - \mQ_2)^{T} \frac{1}{2}C(\mQ_2) (\mQ_1 - \mQ_2)).&&
\end{flalign*}
By setting $C(\mQ) = \sum_{x \in X_k} \vx \vx^T / \max(\Vert\mQ \vx\Vert, \delta)$, the above equation becomes
\begin{flalign*}
&\sum \limits_{\vx \in \cX_k, \Vert\mQ_2 \vx\Vert\ge \delta} \Vert\mQ_2 \vx\Vert +  \sum \limits_{\vx \in \cX_k, \Vert\mQ_2 \vx\Vert < \delta}\left( \frac{\Vert\mQ_2 \vx\Vert^2}{2 \delta} + \frac{\delta}{2}\right) + \tr(\mQ_2 \mA_k) + &&\\\nonumber
&\tr \biggr( (\mQ_1 - \mQ_2)^{T} \mQ_2 \sum \limits_{\vx \in \cX_k} \frac{\vx \vx^T }{\max(\Vert\mQ_2 \vx\Vert, \delta)}
+ \mA_k \biggr) +  &&\\\nonumber
&\tr\left((\mQ_1 - \mQ_2)^{T} \sum \limits_{x \in X_k} \frac{\vx \vx^T} {2\max(\Vert\mQ_2 \vx\Vert, \delta)} (\mQ_1 - \mQ_2)\right) = \sum \limits_{\vx \in \cX_k, \Vert\mQ_2 \vx\Vert\ge \delta} \Vert\mQ_2 \vx\Vert + &&\\\nonumber
&\sum \limits_{\vx \in \cX_k, \Vert\mQ_2 \vx\Vert < \delta}\biggr( \frac{\Vert\mQ_2 \vx\Vert^2}{2 \delta} + \frac{\delta}{2}\biggr) + \tr(\mQ_1 \mA_k) - \sum \limits_{\vx \in \cX_k} \frac{\Vert\mQ_2 \vx\Vert^2}{\max(\Vert\mQ_2 \vx\Vert, \delta)} + &&\\\nonumber
& \sum \limits_{\vx \in \cX_k} \frac{\Vert\mQ_2 \vx\Vert^2}{2\max(\Vert\mQ_2 \vx\Vert, \delta)}  + \sum \limits_{\vx \in \cX_k} \frac{\Vert\mQ_1 \vx\Vert^2}{2\max(\Vert\mQ_2 \vx\Vert, \delta)} = \sum \limits_{\vx\in \cX_k} \frac{\Vert\mQ_1 \vx\Vert^2}{2\max(\Vert\mQ_2 x\Vert, \delta)} + &&\\\nonumber
&\sum \limits _{\vx \in \cX_k} \frac{\max(\Vert\mQ_2 \vx\Vert, \delta)}{2} +\tr(\mQ_1 \mA_k) =H(\mQ_1, \mQ_2).&&
\end{flalign*}
That is, condition 1 of Hypothesis 4.2 is verified, conditions 2 and 3 follow directly from the definition of $C(\mQ)$ and condition 4 follows from \eqref{eq:regineqmaj2}.
\QED

\bibliography{consbib}{}
\bibliographystyle{plain}

\end{document}